\newtheorem{question}{Question}
\newtheorem{conjecture}{Conjecture}
\newtheorem{theorem}{Theorem}
\newtheorem{lemma}[theorem]{Lemma}
\newtheorem{corollary}[theorem]{Corollary}
\newtheorem{step}{Step}
\theoremstyle{definition}
\newtheorem{definition}[theorem]{Definition}
\theoremstyle{remark}
\newtheorem{remark}[theorem]{Remark}
\def\R{\mathbb{R}}
\def\Z{\mathbb{Z}}
\def\N{\mathbb{N}}
\def\Q{\mathbb{Q}}
\numberwithin{theorem}{section}
\theoremstyle{plain}
\begin{document}

\title []{A NOTE ON EMBEDDINGS OF 3-MANIFOLDS IN SYMPLECTIC 4-MANIFOLDS}

\author{Anubhav Mukherjee}

\address{School of Mathematics \\ Georgia Institute
of Technology \\  Atlanta  \\ Georgia}

\email{anubhavmaths@gatech.edu}


\begin{abstract}
We show that any closed oriented 3-manifold can be topologically embedded in some simply-connected closed symplectic 4-manifold, and that it can be made a smooth embedding after one stabilization. As a corollary of the proof we show that the homology cobordism group is generated by Stein fillable 3-manifolds. We also find  obstructions on smooth embeddings: there exists 3-manifolds which cannot smoothly embed in a way that appropriately respect orientations in any symplectic manifold with weakly convex boundary. This embedding obstruction can also be used to detect exotic smooth structures on 4-manifolds.
\end{abstract}

\maketitle

\section{Introduction}

The embedding of 3-manifolds in higher dimensional space has always been a fascinating problem. Whitney's embedding theorem \cite{whitney} says that every closed oriented 3-manifold smoothly embeds in $\R ^6$. Hirsch improved this result by proving \cite{hirsch}  that every 3-manifold can be smoothly embedded in $S^5$. Meanwhile, Lickorish \cite{Lickorish} and Wallace \cite{Wallace} proved that every 3-manifold can be smoothly embedded in some 4-manifold, and in fact, a generalization of their arguments shows that every 3-manifold can be smoothly embedded in the connected sum of copies of $S^2\times S^2$. Freedman proved \cite{freedman} that all integer homology 3-spheres can be embedded topologically, locally flatly in $S^4$. On the other hand, the Rokhlin invariant $\mu$ and Donaldson's diagonalization theorem \cite{donaldosn} show that some integer homology spheres cannot smoothly embed in $S^4$.  Now, one can ask: Does there exists a compact 4-manifold in which all 3-manifolds embed? Shiomi \cite{shiomi} gave a negative answer to this question. Aceto, Golla, and Larson \cite{golla17} studied the problem of embedding 3-manifolds in spin 4-manifolds. Symplectic manifolds form a very interesting class of 4-manifolds. Etnyre, Min, and the author conjectured the following in \cite{mukherjee19}.
\begin{conjecture}\label{conjecture}
Every closed, oriented smooth 3-manifolds smoothly embed in a symplectic 4-manifold.
\end{conjecture}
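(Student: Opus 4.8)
The plan is to first obtain a topological embedding into a closed simply connected symplectic $4$-manifold and then to bootstrap to a smooth one. For the topological part I would argue as follows. By Lickorish--Wallace, $Y$ is integral surgery on a framed link in $S^3$, so it bounds a compact $4$-manifold $W$; doing surgery on loops generating $\pi_1(W)$ one may take $W$ simply connected, and likewise $-Y$ bounds a simply connected $W'$, which we connect-sum in its interior with one copy of $\mathbb{CP}^2$ so that the closed simply connected $4$-manifold $Z:=W\cup_Y W'$ is non-spin. Further interior connected sums of $W'$ with $\pm\mathbb{CP}^2$ leave the smoothly embedded copy of $Y\subset Z$ untouched, and, since indefinite odd unimodular forms are classified by rank and signature, they let one arrange that the intersection form of $Z$ equals that of $E(n)\#\ell\,\overline{\mathbb{CP}^2}$ for a suitable odd $n$ and $\ell\ge 0$, where $E(n)$ is an elliptic surface. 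As $Z$ is smooth its Kirby--Siebenmann invariant vanishes, so Freedman's classification identifies $Z$ up to homeomorphism with this blow-up of $E(n)$ --- a closed simply connected K\"ahler, hence symplectic, $4$-manifold. Transporting the smooth (hence locally flat) copy of $Y$ through this homeomorphism, which preserves local flatness because it preserves codimension-one coordinate charts, yields a topological, locally flat embedding of $Y$ into a closed simply connected symplectic $4$-manifold.

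To make the embedding smooth I would exploit the freedom that remains: the surgery presentation of $-Y$, hence $W'$, is far from unique, and $W'$ may further be altered in its interior by logarithmic transforms, Fintushel--Stern knot surgeries, rational blow-downs and connected sums --- none of which disturb $\partial W'=Y$ --- so the goal becomes to use these to make $Z$ actually \emph{diffeomorphic} to a symplectic $4$-manifold. The favorable case is when $Y$ bounds a Stein domain, equivalently carries an open book with positive monodromy: by the Lisca--Mati\'c theorem that every Stein domain embeds in a closed K\"ahler surface, $Y$ then embeds smoothly into a closed symplectic $4$-manifold as a contact-type hypersurface, and one is done. The heart of the matter is therefore a general $Y$, where every open book for $Y$ has a negative Dehn twist in its monodromy.

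The main obstacle --- and the reason this is still a conjecture --- is that the general case genuinely cannot be reduced to the Stein case. There are $3$-manifolds, for example $Y=\overline{\Sigma(2,3,5)}$ by Etnyre--Honda's non-existence theorem, that carry \emph{no} tight contact structure; such a $Y$ cannot sit in any symplectic $4$-manifold as a convex or concave hypersurface, since a separating contact-type copy of $Y$ would exhibit one of its two sides as a strong symplectic filling of a contact structure on $Y$, which is impossible. Hence every standard route to symplecticity --- a convex filling capped off concavely, a Lefschetz fibration over $S^2$, a branched cover of $\mathbb{CP}^2$ along a transverse branch locus --- all of which produce $Y$ as a contact-type hypersurface, cannot reach such a $Y$, and one must instead embed $Y$ as a hypersurface along which the symplectic form is allowed to degenerate. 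My proposed attack on this step is to organize the construction around a \emph{near-symplectic} form on $Z$, which exists once $b^+(Z)\ge 1$ (a condition arrangeable in the first step), with its singular circles isotoped off $Y$, and then to deform this near-symplectic form to an honest symplectic one while holding $Y$ fixed, using the surgery freedom in $W'$ together with the geography of simply connected symplectic $4$-manifolds to land exactly on a symplectic diffeomorphism type. Carrying out this last deformation --- equivalently, certifying symplecticity of the particular $Z$ one has built --- is where the real difficulty lies.
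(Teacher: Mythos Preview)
The statement you are attempting to prove is a \emph{conjecture} in the paper, not a theorem; the authors explicitly say they cannot resolve it and prove instead the weaker Theorem~\ref{ttop} (topological embedding, smooth after one $S^2\times S^2$ stabilization). Your proposal is candid about this too: the last paragraph concedes that certifying symplecticity of the particular $Z$ you build ``is where the real difficulty lies,'' so you have not claimed a proof either.

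Your topological-embedding argument is correct and genuinely different from the paper's. You assemble a smooth closed simply connected $Z\supset Y$, stabilize with $\pm\mathbb{CP}^2$ until its odd indefinite form matches that of a blown-up elliptic surface, and invoke Freedman's classification. The paper instead modifies a surgery presentation of $Y$ by blow-ups so that all but one $2$-handle is Stein, attaches a single Mazur cork, and performs a cork twist making the whole filling Stein; since the cork twist extends over the $4$-manifold only as a homeomorphism, $Y$ survives topologically in the resulting Stein domain, which is then capped symplectically. The paper's route buys more: the cork is explicit, and undoing it smoothly costs exactly one $S^2\times S^2$, yielding the stabilization clause of Theorem~\ref{ttop}. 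Your Freedman-matching route is slicker but gives no control on the number of stabilizations needed to smooth the embedding.

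On the smooth conjecture itself, your near-symplectic plan is not a proof and you do not pretend it is. Two remarks. First, your contact-type obstruction is slightly off: if $Y=\overline{\Sigma(2,3,5)}$ has no tight contact structure but $-Y=\Sigma(2,3,5)$ does, then the \emph{submanifold} $Y$ can still appear as a contact-type hypersurface (with $-Y$ as the convex side), so this example does not by itself rule out contact-type embeddings. Second, deforming a near-symplectic form on a fixed smooth $Z$ to a symplectic one is exactly the problem of showing $Z$ is symplectic, and the surgeries you list alter the diffeomorphism type of $Z$ in ways not visibly governed by the near-symplectic data; so the proposed attack does not reduce the difficulty. The conjecture remains open, consistent with the paper.
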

\noindent
 For example, notice that if $Y$ is obtained by doing integer surgery on a knot in $S^3$ then $Y$ can have a   smooth oriented embedding in $\mathbb{CP}^2 \# \overline{\mathbb{CP}^2}$ or $\mathbb{CP}^2\#_2 \overline{\mathbb{CP}^2}$ depending on whether the surgery coefficient is odd or even. There does not seem to be an analogous result for links. 

\subsection{Embeddings in symplectic manifolds}
While we cannot resolve the above conjecture, we can show the existence of topological embeddings and of smooth embeddings after stabilization.

\begin{theorem}\label{ttop} Given a closed, connected, oriented 3-manifold $Y$ there exists a simply-connected symplectic closed 4-manifold $X$ such that $Y$ can be embedded topologically, locally flatly (i.e. it has collar neighbourhood) in $X$. This embedding can be made a smooth embedding after one stabilization, that is $Y$ can smoothly embed in $X\#(S^2 \times S^2)$.

\end{theorem}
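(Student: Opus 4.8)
The plan is largely combinatorial on the $4$-manifold side, using Freedman's classification to land on a symplectic model. First I would choose bounding pieces: $Y$ bounds a compact oriented $4$-manifold built from one $0$-handle and some $2$-handles (take an integer surgery presentation of $Y$), so $Y=\partial W$ with $W$ simply connected, and likewise $-Y=\partial W_-$ with $W_-$ simply connected. For a closed simply-connected $4$-manifold $V$ to be chosen later, set
\[
X \;=\; W \,\cup_Y\, \bigl(W_- \,\natural\, (V\setminus B^4)\bigr),
\]
a closed $4$-manifold with $\pi_1(X)=1$ by van Kampen and with a smoothly embedded copy of $Y$; the only remaining freedom is the choice of $V$.

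I would choose $V$ so that $X$ is homeomorphic to a symplectic $4$-manifold. When $Y$ is an integral homology sphere, $H_2(X)=H_2(W)\oplus H_2(W_-)\oplus H_2(V)$ orthogonally, so it suffices to realize the intersection form and Kirby--Siebenmann invariant of some simply-connected symplectic $4$-manifold. Recalling that every closed symplectic $4$-manifold has $b^+$ odd, I target a blow-up $\mathbb{CP}^2\#k\overline{\mathbb{CP}^2}$ of the projective plane when $Q_W\oplus Q_{W_-}$ is odd, and an elliptic surface $E(2m)$ with $m\gg 0$ when it is even (both have $b^+$ odd automatically). Since $V$ may be any closed simply-connected $4$-manifold --- in particular with any prescribed unimodular form, subject only to the Furuta bound in the even case, realized by connected sums of $\mathbb{CP}^2$, $\overline{\mathbb{CP}^2}$ and $K3$ --- this bookkeeping is solvable, so $X$ is homeomorphic to the chosen K\"ahler (hence symplectic) surface by Freedman's theorem; composing the embedding with that homeomorphism proves the topological statement. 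For $Y$ with $b_1>0$ or with torsion in $H_1$, a Mayer--Vietoris correction enters and one must match the full linking-form data against the symplectic target, but the scheme is unchanged.

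Taking $V$ smooth (always possible once $m$ is large), the manifold $X$ above is smooth, contains $Y$ smoothly, and is homeomorphic to the symplectic model $X_0$. By Wall's stabilization theorem $X\#N(S^2\times S^2)$ is diffeomorphic to $X_0\#N(S^2\times S^2)$ for some $N$, and the smooth half of the statement amounts to showing one may take $N=1$; then $Y$ embeds smoothly in $X_0\#(S^2\times S^2)$. (Arranging the construction instead around Stein $2$-handles plus Etnyre's symplectic cap for the leftover convex boundary realizes $X_0$ through Stein fillable pieces, which is also the source of the homology cobordism corollary.)

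The main obstacle is precisely forcing $N=1$: Wall's theorem only supplies \emph{some} number of $S^2\times S^2$ stabilizations, and bringing it down to one requires arranging $X$ --- equivalently the choice of $V$ and the gluing --- to lie in a family known to become standard after a single stabilization. On the topological side the delicate case is $b_1(Y)>0$ or $H_1(Y)$ with torsion, where one must carry out the linking-form (not merely intersection-form) matching against an honest symplectic $4$-manifold.
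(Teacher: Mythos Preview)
Your outline is a different route from the paper's, and the obstacle you name at the end --- forcing Wall's $N$ down to $1$ --- is a genuine gap that your construction gives no mechanism to close. The paper sidesteps it entirely.

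Rather than matching intersection forms against a target symplectic model, the paper builds the symplectic manifold directly. Starting from a $2$-handlebody $W$ with $\partial W = Y$, one blows up and Legendrian-realizes until every $2$-handle except a single $(-1)$-framed unknot $K$ is attached at framing $tb-1$; the sub-handlebody $W_0$ on the complement of $K$ is thus already Stein. Now attach a single Akbulut--Mazur cork (one $1$-handle and one algebraically cancelling $0$-framed $2$-handle) linking $K$, obtaining $W_2$ with new boundary $Y'$. The cork twist (swap the dotted circle and the $0$-framed $2$-handle) produces a manifold $W_2'$ homeomorphic to $W_2$ by Freedman, but now Stein: after the twist $K$ runs geometrically once over the new $1$-handle, which after cancellation replaces $K$ by a Legendrian knot with $tb=+1$, so the remaining $(-1)$-framed attachment is Legendrian surgery. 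Capping $W_2'$ with the simply-connected symplectic cap of Theorem~\ref{cap} gives the closed simply-connected symplectic $X$, and $Y \subset W_2 \cong_{\mathrm{top}} W_2' \subset X$ supplies the locally flat embedding.

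The single-stabilization claim then comes from a specific property of this cork, not from Wall: one $S^2\times S^2$ summand smoothly undoes the twist. The paper checks this by explicit handle slides (Figure~\ref{smooth}) --- the extra $0$-framed meridian from the Hopf link allows one to unlink and cancel the cork's $1$-handle --- so $W_2\#(S^2\times S^2)\cong W_2'\#(S^2\times S^2)$ smoothly and $Y$ embeds smoothly in $X\#(S^2\times S^2)$.

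In short, the missing idea is to arrange that the entire difference between ``contains $Y$ smoothly'' and ``is Stein'' is a \emph{single} cork twist, since this particular twist dies after one stabilization. Your form-matching scheme, by contrast, leaves the stabilization number uncontrolled; it also leans on symplectic geography input and leaves the non-$\Z HS^3$ case only sketched, whereas the paper's construction is uniform in $Y$.
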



When smooth embedding in symplectic manifolds do exist, they can imply interesting things about topology. For example, 
recall that a closed oriented rational homology sphere is called an $L$-\textit{space} if its Heegaard Floer homology group is ``as simple as possible,'' as specified in Section~\ref{heegard}. Embeddings of $L$-spaces in symplectic manifolds are constrained as follows. 

\begin{theorem}\label{psep} 
If an $L$-space $Y$ smoothly embeds in a closed symplectic 4-manifold $X$ then it has to be separating. Moreover, if $X = X_1 \cup _Y X_2$ then one of the $X_i$ has to be a negative-definite 4-manifold.

\end{theorem}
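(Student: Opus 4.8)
The plan is to combine Taubes' nonvanishing theorem for symplectic four--manifolds with the behaviour of Seiberg--Witten (equivalently, Ozsv\'ath--Szab\'o mixed) invariants under cutting along a rational homology sphere. First the homological reductions. Since $X$ is symplectic, $[\omega]^2>0$, so the intersection form of $X$ is not negative (semi)definite and $b_2^+(X)\ge 1$; also an $L$-space is by definition a rational homology sphere, so $Y$ is a $\Q$HS. If $Y$ separates $X=X_1\cup_Y X_2$, then a Mayer--Vietoris computation with $\Q$ coefficients identifies $H_2(X;\Q)\cong H_2(X_1;\Q)\oplus H_2(X_2;\Q)$ as an orthogonal sum for the intersection form; since the form on the closed manifold $X$ is nondegenerate this forces $b_2^0(X_i)=0$ and $b_2^+(X)=b_2^+(X_1)+b_2^+(X_2)$ (and similarly for $b_2^-$). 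Hence if one $X_i$, say $X_2$, has $b_2^+(X_2)=0$, then $X_2$ is negative definite and the theorem holds. It therefore suffices to rule out the remaining two configurations: \emph{(a)} $Y$ is non-separating; and \emph{(b)} $Y$ separates $X=X_1\cup_Y X_2$ with $b_2^+(X_i)\ge 1$ for $i=1,2$ (which in particular forces $b_2^+(X)\ge 2$).

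For \emph{(b)}, the key input on one side is Taubes' theorem: a closed symplectic four--manifold with $b_2^+\ge 2$ has $\Phi_{X,\mathfrak{s}_\omega}=\pm 1$ for the canonical $\mathrm{spin}^c$ structure $\mathfrak{s}_\omega$, where $\Phi$ is the Ozsv\'ath--Szab\'o mixed four--manifold invariant (equivalently, its Seiberg--Witten invariant is nonzero). On the other side, the composition/gluing law for the mixed invariant along a rational homology sphere writes $\Phi_{X,\mathfrak{s}}$, for every $\mathfrak{s}$, as a pairing of the relative invariants of $X_1$ and $X_2$ with values detected inside $HF^+_{\mathrm{red}}(Y,\mathfrak{s}|_Y)$ (paired against the reduced Floer homology of $-Y$ via the duality pairing): each piece with $b_2^+(X_i)\ge 1$ annihilates the ``tower'' part of Heegaard Floer homology, so only the reduced part can contribute. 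Because $Y$ is an $L$-space we have $HF^+_{\mathrm{red}}(Y,\mathfrak{t})=0$ for every $\mathfrak{t}$, so $\Phi_{X,\mathfrak{s}}=0$ for all $\mathfrak{s}$, contradicting Taubes. This rules out \emph{(b)}.

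For \emph{(a)}, cut $X$ open along $Y$ to obtain a connected cobordism $W$ from $Y$ to $Y$, with $b_2^+(W)\ge b_2^+(X)-1$. One then expresses $\Phi_{X,\mathfrak{s}}$ (or, when $b_2^+(X)=1$, the Taubes--chamber Seiberg--Witten invariant $SW_X$) as a graded trace of the endomorphism of $HF^+(Y)$ (resp.\ of the reduced monopole Floer group) induced by $W$, summed over $\mathrm{spin}^c$ structures on $W$ restricting to $\mathfrak{s}$. Since $HF^+_{\mathrm{red}}(Y)=0$, this endomorphism is determined on the remaining (non-reduced, $HF^\infty$) part purely by $H^*(W)$ and the tower structure, and the same reasoning as in \emph{(b)} — the positive part $b_2^+(W)\ge 1$ forces the contribution to factor through the vanishing reduced group — yields that the trace vanishes. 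Hence $\Phi_{X,\mathfrak{s}}=0$ (resp.\ $SW_X=0$), again contradicting Taubes, so $Y$ must separate.

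The routine parts are the Mayer--Vietoris bookkeeping and the deduction of negative definiteness from $b_2^+=b_2^0=0$. The main obstacle is making the gluing and trace formulas precise: one must match orientation and duality conventions so that one side of the splitting genuinely contributes the map $\mathbb{Z}\to HF^+(Y)$ whose image is detected by the reduced part (using $b_2^+\ge 1$ to kill the top class) while the other contributes the dual map out of $HF^-$, so that the pairing honestly factors through $HF_{\mathrm{red}}(Y)$; and one must handle the $b_2^+(X)=1$ sub-case of the non-separating argument, where the Ozsv\'ath--Szab\'o mixed invariant is unavailable and one works instead with the Seiberg--Witten invariant in the Taubes chamber and the monopole Floer trace formula.
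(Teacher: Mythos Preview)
Your treatment of the separating case \emph{(b)} is essentially the paper's argument: both identify $Y$ as an admissible cut for the closed symplectic manifold, observe that the mixed invariant then factors through $HF^+_{\mathrm{red}}(Y)=0$, and derive a contradiction with the Ozsv\'ath--Szab\'o nonvanishing theorem for symplectic manifolds.

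For the non-separating case \emph{(a)}, however, the paper does something quite different and considerably simpler than your trace argument. Rather than invoking a Lefschetz-type trace formula for the self-gluing, the paper passes to the double cover: cut $X$ along $Y$ to obtain $X_1$ with $\partial X_1 = Y\sqcup -Y$, and glue two copies $X_1^1, X_1^2$ along their boundaries to form a closed manifold $X'$. Since $X'\to X$ is a genuine double cover, the symplectic form pulls back and $X'$ is symplectic. Now tube the two copies of $Y$ inside $X_1^1$ along an arc: removing a neighbourhood $N$ of this arc from $X_1^1$ and adding it to $X_1^2$ produces a decomposition $X' = X_1'\cup X_2'$ along the \emph{separating} $L$-space $Y\#(-Y)$, with $b_2^+(X_i') = b_2^+(X)>0$ on each side. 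One is then back in case \emph{(b)}, with $b_2^+(X')\ge 2$ guaranteed.

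This double-cover trick buys two things your approach has to fight for. First, it completely sidesteps the $b_2^+(X)=1$ sub-case: the cover automatically has $b_2^+\ge 2$, so the standard mixed-invariant machinery applies with no chamber analysis. Second, it avoids the trace formula altogether. The identity you invoke, expressing $\Phi_{X,\mathfrak{s}}$ as a graded trace of the cobordism map $F^+_{W}$ on $HF^+(Y)$ for a non-separating cut, is not part of the standard Heegaard Floer package (the composition law is stated for separating decompositions), and making it precise---including the regularisation of the trace on the infinite tower and matching it to the closed invariant in the correct chamber when $b_2^+=1$---is exactly the ``main obstacle'' you flag but do not resolve. The paper's argument needs none of this.
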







\begin{remark}
 Ozsv\'ath and Szab\'o \cite{oss} have established the above result for separating $L$-spaces in a symplectic manifolds. So the main new content of the above theorem is to show that $L$-spaces cannot be embedded as non-separating hypersurfaces in symplectic manifolds. The proof we give of this was inspired by Agol and Lin's work on hyperbolic $4$-manifolds \cite{agol2018hyperbolic}. 
\end{remark}

Theorem~\ref{psep} gives rise to a very interesting question.
\begin{question}\label{1} Does every $L$-space bound a definite 4-manifold?

\end{question}

\begin{remark}

 Notice that if the Conjecture~\ref{conjecture} proposed by Etnyre, Min, and the author \cite{mukherjee19} is true then the above question has a positive answer.
\end{remark}
In Section~\ref{cobord}, after we prove Theorem~\ref{psep}, we will discuss a strategy in Remark~\ref{strategy} to give a negative answer to Question~\ref{1}, and such an answer would in turn give a counterexample to the conjecture that closed $3$-manifolds can be smoothly embedded in symplectic $4$-manifolds.

\subsection{Cobordisms and symplectic structures}
We say that a closed oriented 3-manifold is \textit{Stein fillable} if there is a Stein fillable contact structure on it, whose definition is deferred until Section~\ref{dstein}. Not all 3-manifolds are Stein fillable. Work of Eliashberg \cite{etight} and Gromov \cite{gtight} proved that Stein fillable contact structures are always tight. Lisca \cite{lisca} gave the first example  of a non-Stein fillable manifold, and Etnyre and Honda \cite{eh} improved the result by showing the existence of a 3-manifold without a tight contact structure.

Let $Y_0$ and $Y_1$ be smooth, oriented, closed $n$-manifolds. A \textit{cobordism} from $Y_0$ to $Y_1$ is a compact $(n+1)$-dimensional smooth, oriented, compact manifold $W$ with $\partial W= -Y_0\sqcup Y_1$. We say $Y_0$ and $Y_1$ are \textit{$R$-homology cobordant}, written $Y_0\sim Y_1$, if $H_*(W,Y_i;R)=0$ for $i=0,1$. We call this \textit{integral homology cobordism} when $R=\Z$ and \textit{rational homology cobordism} when $R=\Q$. This is an equivalence relation, so one can define
\[
\Theta^n_R \ = \ \{ Y \textit{is a smooth oriented closed n-manifold with} \   H_*(Y;R)=H_*(S^n;R)\}\ / \sim 
\]
where $R$ is a fixed commutative ring. We give $\Theta_R^n$ the structure of a group where summation is given by the connected sum operation. The zero element of this group is given by the class of $S^n$, and the inverse of the class of $[ Y]$ is given by the class of $Y$ with reversed orientation. 
In low-dimensional topology the study of $\Theta^3_{\Z}$ and $\Theta^3_{\Q}$ are of special interest.

\begin{theorem}\label{cemb} The homology cobordism groups $\Theta^3_\Z$ and $\Theta^3_\Q$ are generated by Stein fillable 3-manifolds.
\end{theorem}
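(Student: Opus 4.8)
The plan is to extract this from the construction underlying Theorem~\ref{ttop}. Since $[-Y']=-[Y']$ and $[S^3]=0$, it suffices to show that every rational homology sphere $Y$ (respectively, every integral homology sphere) is rationally (respectively, integrally) homology cobordant to a connected sum of Stein fillable $3$-manifolds and their orientation reverses; running this over all such $Y$ then shows that the classes of Stein fillable manifolds generate $\Theta^3_\Q$ and $\Theta^3_\Z$.

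To get such a decomposition I would run the proof of Theorem~\ref{ttop} while keeping track of the pieces. Present $Y$ by an open book $(\Sigma,\phi)$ and factor $\phi=\psi_-^{-1}\psi_+$ with $\psi_\pm$ products of right-handed Dehn twists; this is possible because every mapping class is a quotient of two positive words. Then $Y$ bounds the $4$-manifold $Z$ that is the total space of the achiral Lefschetz fibration over $D^2$ with fiber $\Sigma$ whose positive vanishing cycles realize $\psi_+$ and whose negative ones realize $\psi_-$ --- this is exactly the block that, after doubling and a connected sum correcting the intersection form, embeds in the symplectic $4$-manifold produced by Theorem~\ref{ttop}. Pushing all the positive critical values into an inner subdisk exhibits a Stein domain $Z_+\subset Z$ with $\partial Z_+=:P_+$ Stein fillable, and $Z\setminus\operatorname{int}Z_+$ is a cobordism from $P_+$ to $Y$ built by attaching one $2$-handle along a page push-off of each curve of $\psi_-$, with framing one more than the page framing. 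Dually, those same curves, taken on their own with the opposite sign, are the vanishing cycles of a Stein domain $Z_-$ with $\partial Z_-=:P_-$ Stein fillable.

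The core step is to upgrade this to a homology cobordism witnessing $[Y]=[P_+]-[P_-]$. First arrange, by stabilizing the open book and inserting cancelling pairs of Dehn twists into the factorization --- operations that change neither $Y$ nor the shape $\psi_-^{-1}\psi_+$ --- that $P_+$ and $P_-$ are themselves rational (respectively, integral) homology spheres. Then glue the cobordism $Z\setminus\operatorname{int}Z_+$ from $P_+$ to $Y$ to the reversed Stein domain $\overline{Z_-}$ along the common $\psi_-$-curves, producing a cobordism from $P_+\#(-P_-)$ to $Y$, and check it is acyclic relative to both ends: the relative second homology contributed by the negative-Lefschetz $2$-handles on the $Y$-side should be matched, through the shared vanishing cycles, by the second homology of $\overline{Z_-}$, so all of it cancels. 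Then $[Y]=[P_+]+[-P_-]$ exhibits $[Y]$ in the subgroup generated by Stein fillable manifolds, for both $\Theta^3_\Q$ and $\Theta^3_\Z$.

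I expect the main obstacle to be exactly this homology computation. The cobordism coming straight off the Lefschetz picture carries a nontrivial relative $H_2$ --- one generator per negative vanishing cycle --- and the delicate point is to organize the curves and framings so that these classes are geometrically dual to the cores of the $2$-handles of $\overline{Z_-}$ and hence die after gluing. Making the cancellation work over $\Z$ and not merely over $\Q$, which is where the hypothesis that $Y$ is a homology sphere gets genuinely used, is where the real work lies.
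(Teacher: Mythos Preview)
Your proposal is not a proof but a plan, and you yourself flag the step that is missing: the construction of a homology cobordism from $P_+\#(-P_-)$ to $Y$. That step is a genuine gap, not a routine verification. The ``gluing along the common $\psi_-$-curves'' does not make sense as written: the $\psi_-$-curves used to go from $P_+$ to $Y$ sit on pages inside $P_+$, while the vanishing cycles of $Z_-$ sit on pages inside $\#_g(S^1\times S^2)$ (the boundary of the trivial Lefschetz fibration), so there is no common $3$-manifold along which to glue. If instead you mean to boundary--connect--sum $\overline{Z_-}$ onto the cobordism $Z\setminus\operatorname{int}Z_+$, then there is no mechanism forcing the $2$-handles of one piece to be algebraically dual to those of the other; the curves have the same name in the abstract fiber $\Sigma$ but live in different $3$-manifolds with different framings, and there is no reason their intersection pairing should be unimodular, or even nondegenerate. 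The preliminary step of stabilizing so that $P_\pm$ are individually homology spheres is also not obviously possible and would need an argument.

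By contrast, the paper bypasses all of this. Theorem~\ref{c1} shows directly that \emph{every} closed oriented $3$-manifold $Y$ is $\Z$-homology cobordant to a single Stein fillable $Y'$, via a cobordism built from one $1$-handle and one algebraically cancelling $2$-handle (the ``cork'' of Step~\ref{step2} in the proof of Theorem~\ref{ttop}); the cork twist then exhibits a Stein filling of $Y'$. So in $\Theta^3_\Z$ (and a fortiori in $\Theta^3_\Q$) one has $[Y]=[Y']$ with $Y'$ Stein fillable---no open books, no Lefschetz fibrations, and no decomposition $[Y]=[P_+]-[P_-]$ is needed. This is both simpler and stronger than what your outline aims for.
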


\begin{remark}
  It is not known whether $\Theta^3_{\Q}$ is generated by $L$-spaces or not. Nozaki, Sato and Taniguchi \cite{masaki} proved that $\Sigma (2,3,11) \#_2(- \Sigma(2,3,5))$ does not bound definite 4-manifold. If we can find an $L$-space $Y$ which is rationally cobordant to $\Sigma (2,3,11)$, then $Y\#_2(-\Sigma(2,3,5))$ cannot bounds a definite 4-manifold. Since $Y\#_2(-\Sigma (2,3,5))$ is an $L$-space, Theorem~\ref{psep} says this manifold cannot be smoothly embedded in any symplectic 4-manifold. Conversely, if all 3-manifolds embed in some symplectic 4-manifold, then $\Theta^3_{\Q}$ is not generated by $L$-spaces. So finally we can ask the following question. 
 \end{remark}
 
 \begin{question}
  Is $\Sigma(2,3,11)$ rationally cobordant to some $L$-space?
 \end{question}

For a closed oriented 3-manifold $Y$, $H_3(Y;\Z)$ is canonically isomorphic to $\Z$. So a map $f:Y_0 \rightarrow Y_1$ induces a homomorphism on the top-dimensional homology group, $f_*: \Z \rightarrow \Z$. The degree of $f$ is $f_*(1)\in \Z$. 

\begin{theorem}\label{deg}
Given any 3-manifold $Y$ there exists a Stein fillable 3-manifold $Y'$ and a degree one map $f: Y' \to Y$.
\end{theorem}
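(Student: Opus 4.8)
The plan is to start from a surgery diagram of $Y$ and to repair it into a Legendrian surgery diagram at the cost of a controlled, \emph{collapsible} modification of the underlying $3$--manifold.

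\smallskip\noindent\textbf{Step 1.} Fix a surgery presentation $Y=S^3_{n_1,\dots ,n_\ell}(L)$ on a link $L=L_1\cup\cdots\cup L_\ell\subset S^3$ with integer coefficients, and isotope $L$ into Legendrian position. Recall that $S^3_{n}(K)$ is obtained by Legendrian (contact $(-1)$) surgery precisely when $n=tb(\Lambda)-1$ for some Legendrian representative $\Lambda$ of $K$, which can be achieved iff $n\le \overline{tb}(K)-1$, where $\overline{tb}(K)$ is the maximal Thurston--Bennequin number of the knot type $K$ (finite, by the slice--Bennequin inequality). Call $L_i$ \emph{bad} if $n_i>\overline{tb}(L_i)-1$.

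\smallskip\noindent\textbf{Step 2 (repair).} For each bad component replace $L_i$ by $L_i':=L_i\#T(2,2m_i+1)$ with $T(2,2m_i+1)$ a positive torus knot. Since $\overline{tb}$ is additive under connected sum up to a $+1$ correction and $\overline{tb}\bigl(T(2,2m+1)\bigr)=2m-1$, one gets $\overline{tb}(L_i')=\overline{tb}(L_i)+2m_i$, so for $m_i$ large enough $n_i\le\overline{tb}(L_i')-1$. Keeping the good components untouched, the resulting link $L'$ then admits a Legendrian representative realising every coefficient $n_i$ as $tb-1$ of the corresponding component (stabilise each component down from its maximal $tb$; the connected sums are local and leave the pairwise linking numbers unchanged). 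Thus $S^3_{\vec n}(L')$ is a Legendrian surgery diagram on $(S^3,\xi_{\mathrm{std}})$, so $Y':=S^3_{\vec n}(L')$ bounds a Stein domain obtained by attaching the corresponding Weinstein $2$--handles to $B^4$; in particular $Y'$ is Stein fillable.

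\smallskip\noindent\textbf{Step 3 (the degree one map).} I would construct $f\colon Y'\to Y$ by collapsing the auxiliary torus knots. For any knot $K\subset S^3$ there is a degree one map of pairs from the exterior of a knotted spanning arc in a ball onto the exterior of a trivial spanning arc in a ball (the latter a solid torus), restricting to the identity on the part of the boundary coming from the ambient sphere and sending meridian to meridian, longitude to longitude --- the classical ``unknotting collapse'' of a knot exterior onto a solid torus rel peripheral torus. In $Y=S^3_{\vec n}(L)$ versus $Y'=S^3_{\vec n}(L')$ the two manifolds are assembled from identical pieces except that, near each modified $L_i$, $Y'$ contains the exterior of the knotted arc of the extra torus knot where $Y$ contains the exterior of a trivial arc; because the surgery coefficient on $L_i'$ and on $L_i$ is the same integer $n_i$ and the collapse is supported in the ball holding the added arc (disjoint from the cocore of the $2$--handle), the collapse extends across the surgery. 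Gluing these collapse maps to the identity everywhere else yields $f\colon Y'\to Y$, a diffeomorphism away from those balls and degree one inside each of them; computing the degree at a regular value in the unchanged part gives $\deg f=1$.

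\smallskip\noindent The hard part will be making Step 3 precise: checking that the local collapse maps can be arranged to agree with the identity along the connect-sum spheres so that they genuinely glue, and bookkeeping orientations so that the contributions add to $+1$. I expect this to be routine, reducing to the relative form of the standard fact that a knot complement degree-one dominates the unknot complement rel peripheral torus, together with the (equally standard) independent stabilisation of the components of a Legendrian link in Step 2.
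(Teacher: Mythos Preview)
Your argument is correct and takes a genuinely different route from the paper. The paper deduces this theorem as an immediate corollary of its Theorem~\ref{c1}: there it is shown that $Y'$ sits inside $Y\times[0,1]$ (as one end of an \emph{invertible} $\Z$-homology cobordism built from a single Mazur-type cork attachment followed by a cork twist), and the degree one map is simply the restriction to $Y'$ of the projection $Y\times[0,1]\to Y$. Your approach instead stays entirely at the level of surgery diagrams: you raise the maximal Thurston--Bennequin number of the bad components by connect-summing with positive torus knots, then build the degree one map by the classical collapse of each extra knotted-arc exterior onto a trivial-arc exterior rel boundary. This is more elementary and self-contained---it avoids the cork machinery and Freedman's theorem entirely---and the ``hard part'' you flag is indeed routine (the rel-boundary collapse respects meridian/longitude, hence the Seifert framing, so the $n_i$-surgeries match and the local maps glue to the identity on the unchanged region). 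What the paper's approach buys is strictly more structure: its $Y'$ is $\Z$-homology cobordant to $Y$ via an invertible ribbon cobordism, which is what drives the other applications (e.g.\ Theorem~\ref{cemb}); your $Y'$ is in general only related to $Y$ by a degree one map, not a homology cobordism.
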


Although we cannot give a complete answer about smooth embeddings of 3-manifolds in closed symplectic 4-manifolds, we can find obstructions to smooth embeddings in compact symplectic 4-manifolds with convex boundary. There is an ambiguity when we think about smooth embeddings of a 3-manifold $Y$ in a smooth 4-manifold $X$. As oriented manifold $Y$ is very different from $-Y$: for example the Poincar\'{e} homology sphere with positive orientation bounds a negative-definite 4-manifold but the Poincar\'{e} homology sphere with negative orientation does not. On the other hand, if $Y$ smoothly embeds in $X$ then the boundary of a small neighbourhood of $Y$ is $-Y\sqcup Y$, so $-Y$ smoothly embeds in $X$ as well. But we can fix this issue in terms of cobordisms. 

\begin{definition} We call a smooth embedding of $Y$ in a cobordism $W$ from $Y_0$ to $Y_1$ an {\em oriented\ cobordism\ embedding} if $Y$ is either non-separating or $Y$ separates $W$ into $W_1\sqcup W_2$ such that  $Y$ as an oriented manifold is a boundary component of $W_1$, and all other components of $\partial W_1$ (if they exist) are part of $Y_0$.

\end{definition} 


\begin{theorem}\label{lemb}
If an $L$-space $Y$ does not bound a negative-definite 4-manifold then $Y$ cannot have an oriented  cobordism embedding in any symplectic 4-manifold with weakly convex boundary.
\end{theorem}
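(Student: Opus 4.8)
The plan is to reduce the statement to the closed case covered by Theorem~\ref{psep} by an Eliashberg capping argument, the delicate point being to control which side of $Y$ carries the positive part of $b_2$. Suppose, for contradiction, that $Y$ is an $L$-space bounding no negative-definite $4$-manifold which nevertheless admits an oriented cobordism embedding in a symplectic $4$-manifold $(W,\omega)$ with weakly convex boundary, $W$ being a cobordism from $Y_0$ to $Y_1$. First I would cap off $W$: by Eliashberg's theorem on weak symplectic fillings (applied after a deformation of $\omega$ supported in a collar of $\partial W$, which is disjoint from $Y$ and so does not affect the smooth embedding), each component of $\partial W$ admits a concave symplectic cap, and these caps can be chosen so that their union $C$ contains an embedded symplectic surface of positive self-intersection. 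Gluing yields a closed symplectic $4$-manifold $X=W\cup C$ into which $Y$ smoothly embeds, with a class of positive self-intersection supported inside $C$.

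If $Y$ is non-separating in $W$ it stays non-separating in $X$ (a loop meeting $Y$ transversally once in $W$ still does so in $X$), contradicting Theorem~\ref{psep}; so $Y$ separates. By the definition of an oriented cobordism embedding, $W=W_1\cup_Y W_2$ with $Y$ occurring with its given orientation as a boundary component of $W_1$ and every other boundary component of $W_1$ belonging to $Y_0$. Next I would split the caps accordingly, writing $X=X_1\cup_Y X_2$ with $X_1=W_1\cup(\text{caps on }\partial W\cap\partial W_1)$ and $X_2=W_2\cup(\text{caps on }\partial W\cap\partial W_2)$, so $\partial X_1=Y$. Here the orientation hypothesis does the work: the outgoing end $Y_1$, together with a cap carrying the positive-square symplectic surface, lies on the $X_2$ side --- when $\partial W_1=Y$ every cap is attached to $X_2$, and more generally the condition that $\partial W_1\setminus Y$ consists only of incoming ends is exactly what keeps the positive cap off the $X_1$ side. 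Hence $b_2^+(X_2)\ge 1$.

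Finally I would apply Theorem~\ref{psep} to $X=X_1\cup_Y X_2$: one of $X_1,X_2$ is negative-definite. Since $X_2$ contains an embedded surface of positive square, $b_2^+(X_2)\ge 1$ and $X_2$ cannot be negative-definite, so $X_1$ is; but then $Y=\partial X_1$ bounds the negative-definite $4$-manifold $X_1$, contradicting the hypothesis on $Y$.

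I expect the main obstacle to sit in the capping step and its interplay with orientations. One must (i) carry out the capping for a \emph{weakly} convex boundary rather than a strong or Stein one, and (ii) produce a cap whose positive $b_2$ is realised by a genuine positive-square symplectic surface, so that positivity persists after gluing along boundary components that need not be rational homology spheres. Even granting this, the heart of the matter is the bookkeeping showing that the positive piece can be routed to the side of $Y$ \emph{opposite} to $W_1$: without it one concludes only that $Y$ or $-Y$ bounds a negative-definite $4$-manifold, and since orientation reversal does not preserve that property this would not suffice.
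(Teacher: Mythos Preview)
Your proposal is correct and follows essentially the same route as the paper: cap off the weakly convex boundary to obtain a closed symplectic manifold, rule out the non-separating case via Theorem~\ref{psep}, and in the separating case use that the cap with $b_2^+>0$ lands on the $X_2$ side while the hypothesis on $Y$ forces $b_2^+(X_1)>0$, so $Y$ is an admissible cut.

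Two small remarks. First, in the paper's intended reading a ``symplectic $4$-manifold with weakly convex boundary'' is simply a weak filling, so as a cobordism $Y_0=\emptyset$; the oriented cobordism embedding condition then reduces to $\partial W_1=Y$, and all caps automatically sit on the $X_2$ side. Your more general bookkeeping with $Y_0\neq\emptyset$ is not needed, and this is why the paper's proof is terser than yours on the routing of the positive cap. Second, the paper invokes its own Theorem~\ref{cap} (Etnyre--Min--Mukherjee) for the existence of a simply-connected cap with $b_2^+>0$, rather than Eliashberg; functionally this is the same input, and your care about realizing the positive class by an embedded surface (so that positivity survives gluing) is a legitimate point the paper leaves implicit.
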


\begin{remark}
 There are many such $L$-spaces, for instance the Poincar\'{e} homology sphere with negative orientation and $r$-surgery for $r\in [9,15)$ on the pretzel knot $P(-2,3,7)$ in $S^3$ \cite{tosun17, li} (the latter $L$-spaces are in fact hyperbolic). It was already known that these manifolds are not Stein fillable \cite{tosun17, li, lisca}. Here, we proved that in addition to not being weakly fillable they cannot even have a smooth oriented cobordism embedding in any weak filling of any 3-manifold.
\end{remark}

 \begin{corollary}\label{YxI}
 If $Y'$ admits a  weakly fillable contact structure then any $L$-space $Y$ which does not bound negative-definite 4-manifolds cannot have any smooth oriented cobordism embedding in  $Y'\times I$.
 \end{corollary}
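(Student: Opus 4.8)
The plan is to derive Corollary~\ref{YxI} directly from Theorem~\ref{lemb} by observing that $Y'\times I$ sits inside any weak filling of $Y'$ as a collar of the boundary, and that an oriented cobordism embedding into $Y'\times I$ is inherited by that filling. Suppose for contradiction that the $L$-space $Y$, which does not bound a negative-definite $4$-manifold, has a smooth oriented cobordism embedding in $Y'\times I$, regarded in the natural way as a cobordism from $Y'$ to $Y'$, with incoming end $Y'\times\{0\}$ and outgoing end $Y'\times\{1\}$; we may assume $Y'$ is connected. Fix a weakly fillable contact structure $\xi$ on $Y'$ and a weak symplectic filling $(W,\omega)$ of $(Y',\xi)$. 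By definition $W$ is a compact symplectic $4$-manifold with weakly convex boundary and $\partial W=Y'$.

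Next I would identify a closed collar neighbourhood of $\partial W$ in $W$ with $Y'\times[0,1]$ by an orientation-preserving diffeomorphism that carries the outgoing end $Y'\times\{1\}$ onto $\partial W$ (a quick check of the boundary-orientation conventions confirms that this, rather than the other end, is the one that goes to $\partial W$). Then the incoming end $Y'\times\{0\}$ is sent to a parallel copy of $Y'$ in the interior of $W$, and the complementary region $W_0:=\overline{W\setminus(Y'\times[0,1])}$, itself diffeomorphic to $W$, is glued on along that copy. After a small isotopy supported in the interior of the collar, the image of $Y$ is disjoint from $Y'\times\{0\}$ and from $\partial W$.

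Now I would transfer the embedding, regarding $W$ as a cobordism from the empty $3$-manifold to $Y'$, so that $Y_0=\emptyset$. If $Y$ is non-separating in $Y'\times I$, then $W\setminus Y$ is the union of the connected sets $W_0$ and $(Y'\times I)\setminus Y$ along $Y'\times\{0\}$, hence connected, so $Y$ is non-separating in $W$. If $Y$ separates $Y'\times I$, the hypothesis provides a piece $W_1$ having $Y$ as a boundary component with its given orientation and all of its remaining boundary components (if any) contained in $Y'\times\{0\}$; since $Y'$ is connected this forces $\partial W_1$ to be either $Y$ or $Y\sqcup(Y'\times\{0\})$, and gluing $W_0$ on along $Y'\times\{0\}$ when the latter occurs produces a codimension-$0$ submanifold $\widetilde{W}_1\subseteq W$ with $\partial\widetilde{W}_1=Y$, still with the correct orientation. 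In either case $Y$ has an oriented cobordism embedding in the symplectic $4$-manifold $W$ with weakly convex boundary, contradicting Theorem~\ref{lemb}. This proves the corollary.

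The statement really is a corollary, so I expect no substantive obstacle; the only point requiring care is orientation bookkeeping. Concretely, one must compute the induced orientations on the two ends of the cobordism $Y'\times I$, verify that the collar identification with $\partial W$ is orientation-preserving precisely when the \emph{outgoing} end is matched to $\partial W$, and check that the separating case of the definition genuinely puts the incoming end $Y'\times\{0\}$ (and not $Y'\times\{1\}$) on the same side of $Y$ as $W_1$, so that capping off yields a region bounded by $Y$ rather than by $-Y$.
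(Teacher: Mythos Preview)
Your proposal is correct and follows the same approach as the paper: both glue a weak filling of $Y'$ to $Y'\times I$ (equivalently, identify $Y'\times I$ with a collar of the boundary of the filling) and then invoke Theorem~\ref{lemb}. The paper's proof is a single sentence and simply asserts that the oriented cobordism embedding is inherited by the filling, whereas you spell out the case analysis (non-separating versus separating, and whether $Y'\times\{0\}$ lies on the $W_1$ side) and the orientation bookkeeping; this extra care is sound and not in tension with the paper's argument.
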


 The difference between smooth and topological embeddings can be used to detect exotic structures on compact manifolds. If we find two homeomorphic 4-manifolds such that a 3-manifold embeds smoothly in one but not the other then they are not diffeomorphic, i.e. they are an exotic pair. The upcoming Corollary~\ref{cex} was first proved by Akbulut \cite{a91} and since then by many others, but we will give an alternative proof that follows from the study of embeddings of $3$-manifolds into $4$-manifolds. 
 
 \begin{corollary}\label{cex}
 
 There exists compact 4-manifolds with boundary $X$ and $X'$ such that $b_2(X)=b_2(X')=1$ that are homeomorphic but not diffeomorphic.
 \end{corollary}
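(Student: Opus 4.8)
The plan is to apply the observation recorded above --- that a $3$-manifold embedding smoothly in one of two homeomorphic $4$-manifolds but not in the other forces them to be non-diffeomorphic --- to a carefully chosen ``bad'' $L$-space $Y$, namely one that bounds no negative-definite $4$-manifold. Such $Y$ exist by the remarks above, for instance $Y=-\Sigma(2,3,5)$ or $Y=S^3_r\big(P(-2,3,7)\big)$ with $r\in[9,15)$, and each of these arises by a positive integer surgery on a knot, say $Y=S^3_n(K)$ with $n>0$ (e.g.\ $n=1$ and $K$ the right-handed trefoil). I will produce a symplectic $4$-manifold $X'$ with weakly convex boundary and a smooth $4$-manifold $X$, both with $b_2=1$, such that $X\cong X'$ topologically, $Y$ embeds smoothly in $X$ as an oriented cobordism embedding, but $Y$ does not embed this way in $X'$. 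Then no homeomorphism $X\to X'$ can be smoothed, so $X$ and $X'$ form the desired exotic pair. The obstruction on the $X'$ side is Theorem~\ref{lemb}: the bad $L$-space $Y$ has no oriented cobordism embedding in a symplectic $4$-manifold with weakly convex boundary.

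For $X'$ one can take a small Stein domain: attach a single Weinstein $2$-handle to $B^4$ along a Legendrian knot $L\subset(S^3,\xi_{\mathrm{std}})$ whose underlying knot type has maximal Thurston--Bennequin invariant at least $n+1$, realized with $\mathrm{tb}(L)=n+1$. The result is a Stein domain (hence symplectic with strongly, in particular weakly, convex boundary) with $b_2=1$, intersection form $\langle n\rangle$, and boundary $N:=S^3_n(L)$. An alternative model is $X'=Y'\times I$ for a $3$-manifold $Y'$ with $b_1(Y')=1$ carrying a weakly fillable contact structure (e.g.\ $Y'=S^1\times S^2$): then $b_2(X')=b_1(Y')=1$, and Corollary~\ref{YxI} supplies the non-embedding statement in place of Theorem~\ref{lemb}.

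For $X$, realize $Y$ as a separating hypersurface in a $b_2=1$ manifold with boundary $N$: write $X=W_n(K)\cup_Y C$, where $W_n(K)=B^4\cup(\text{$n$-framed $2$-handle along }K)$ is the trace of the surgery (so $b_2=1$, intersection form $\langle n\rangle$, $\partial W_n(K)=Y$) and $C$ is a homology cobordism from $-Y$ to $N$. A boundary-parallel copy of $Y$ then separates $X$ with $W_n(K)$ on one side, whose only boundary component is $Y$ --- exactly the oriented cobordism embedding condition --- so $Y$ embeds smoothly in $X$. One chooses $L$ (hence $N$) and the cobordism $C$ compatibly so that $\partial X=N=\partial X'$. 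Since $X$ and $X'$ are then compact, simply connected (one reads $\pi_1=1$ off the handle pictures), have $b_2=1$ with the same intersection form $\langle n\rangle$, the same boundary $N$, and vanishing Kirby--Siebenmann invariant, Boyer's classification of simply connected compact topological $4$-manifolds with boundary yields an orientation-preserving homeomorphism $X\cong X'$. Because $Y$ embeds smoothly in $X$ but not in $X'$, any diffeomorphism $X\to X'$ --- which may be taken orientation-preserving --- would push this embedding forward to an oriented cobordism embedding of $Y$ in $X'$, contradicting Theorem~\ref{lemb}. Hence $X$ and $X'$ are homeomorphic but not diffeomorphic, with $b_2(X)=b_2(X')=1$.

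The main obstacle is the compatibility in the third step: arranging that the smooth handlebody $X$ containing the bad $L$-space and the symplectic domain $X'$ have identical boundary $3$-manifold $N$ and identical intersection form, which forces a simultaneous choice of the Legendrian knot $L$ and the auxiliary homology cobordism $C$ (and, in the product model, the construction of an exotic self-cobordism of $Y'$). A secondary subtlety is orientation bookkeeping: $-Y$ (for instance $\Sigma(2,3,5)$) need not itself be an $L$-space that fails to bound a negative-definite $4$-manifold, so one must track orientations when invoking Theorem~\ref{lemb} and when reducing the putative diffeomorphism to the orientation-preserving case.
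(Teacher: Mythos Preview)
Your overall strategy matches the paper's: exhibit a bad $L$-space $Y$ (the paper takes $Y=S^3_9(P(-2,3,7))$), build two $b_2=1$ manifolds $X,X'$ with $X'$ Stein, arrange that $Y$ sits in $X$ as an oriented cobordism embedding, and then invoke Theorem~\ref{lemb} to block a smooth embedding in $X'$. The divergence is in how you manufacture the homeomorphism $X\cong X'$, and this is where your argument is incomplete.

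You build $X'$ as a Stein trace on some Legendrian $L$ with boundary $N=S^3_n(L)$, and $X$ as the trace $W_n(K)$ glued to an unspecified homology cobordism $C$ from $-Y$ to $N$, then appeal to Boyer. But you never produce $L$ and $C$ with $\partial X=N=\partial X'$; you yourself flag this as ``the main obstacle,'' and it is not a formality. Moreover, Boyer's classification requires more than matching intersection forms and abstract boundary type: one needs the forms to present the same linking pairing on the boundary via a specified boundary diffeomorphism, which you do not verify. So as written the proposal is a plausible outline, not a proof.

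The paper sidesteps both issues with a single move: start from the trace of $9$-surgery on $P(-2,3,7)$, attach a Mazur cork (this is your missing $C$, and its boundary is your $N$), and then perform a cork twist. The twist does two things at once: it converts the handlebody into a Stein domain (the knot now runs over the $1$-handle, raising its contact framing so the $2$-handle becomes a Weinstein handle), and it supplies the homeomorphism for free, since a cork twist always extends to a homeomorphism of the ambient $4$-manifold by Freedman. No appeal to Boyer and no separate construction of $L$ or $C$ is needed. If you want to complete your approach, the cork attachment of Step~\ref{step2} in the proof of Theorem~\ref{ttop} is exactly the missing ingredient.
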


 
We now turn to studying when a 3-manifold has a Stein filling for some contact structure, and start by discussing obstructions.  The \textit{Rokhlin  invariant} $\mu : \Theta^3_{\Z}\rightarrow \Z /2$ is defined as $\mu(Y)= \sigma (W)/8  \pmod{2}$, where $W$ is any compact, spin 4-manifold with boundary $Y$ and $\sigma(W)$ is its signature. This invariant $\mu$ is an invariant under homology cobordism. The Brieskorn homology sphere $\Sigma(2,3,7)$ cannot bound a $\Z HB^4$ since its Rokhlin invariant $\mu$ is 1. So any 3-manifold $Y$ that is homology cobordant to $\Sigma(2,3,7)$ cannot have an integer homology ball ( $\Z HB^4$) as a Stein filling. But Fintushel and Stern \cite{FS84} proved that $\Sigma (2,3,7)$ bounds a rational homology ball ($\Q HB^4$). So one can ask if $\Sigma (2,3,7)$ has a $\Q HB^4$ as a Stein filling. The following lemma is well-known and can be proven easily by looking at the long exact homology sequence.
\begin{lemma}\label{3handle}
If a $\Z HS^3$ bounds a $\Q HB^4$ which is not a $\Z HB^4$ then it must have a 3-handle. 
\end{lemma}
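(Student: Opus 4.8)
The plan is to exploit the fact that a $\Z HS^3$, call it $Y$, has $H_*(Y;\Z) = H_*(S^3;\Z)$, and to compare the integral and rational homology of a filling $W$ with $\partial W = Y$. Suppose $W$ is a $\Q HB^4$ but not a $\Z HB^4$. First I would observe that, since $W$ is a $\Q HB^4$, we have $H_*(W;\Q) = H_*(B^4;\Q)$, so all the homology of $W$ is torsion; and since $W$ is \emph{not} a $\Z HB^4$, at least one of these torsion groups is nontrivial. The key step is to pin down \emph{which} homology group carries the torsion: I claim it must be $H_1(W;\Z)$. Indeed, $H_0(W;\Z) = \Z$ always; $H_3(W;\Z)$ and $H_4(W;\Z)$ vanish because $W$ is a connected $4$-manifold with nonempty boundary (it has the homotopy type of a $3$-complex, so $H_4 = 0$, and $H_3(W;\Z) \cong H^1(W,\partial W;\Z)$ which is free, hence zero by the rational condition); and $H_2(W;\Z)$ is free (e.g. by the long exact sequence of $(W,\partial W)$ together with universal coefficients, $H_2(W;\Z) \hookrightarrow H_2(W,\partial W;\Z) \cong H^2(W;\Z)$, and torsion in $H_2(W;\Z)$ would inject into the torsion of $H^2 \cong \mathrm{Ext}(H_1) \oplus \mathrm{Hom}(H_2)$...). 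The cleanest route is: rational acyclicity forces $b_i(W) = 0$ for $i \geq 1$, so $H_2(W;\Z)$ is finite; but $H_2(W;\Z)$ is also a subgroup of the free group $H_2(W,\partial W;\Z) \cong H^2(W;\Z) \cong \mathrm{Hom}(H_2(W;\Z),\Z) \oplus \mathrm{Ext}(H_1(W;\Z),\Z)$ — here I'd instead argue directly that $H_2(W;\Z)$ is free because the intersection form argument or the fact that $\partial W$ is a $\Z HS^3$ makes $H_2(W;\Z) \cong H_2(W,\partial W;\Z)$, which is free. So $H_2(W;\Z) = 0$ as well, and the only room left for torsion is $H_1(W;\Z)$, which is therefore a nontrivial finite abelian group.

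Next I would run the long exact sequence of the pair $(W, \partial W)$ with $\Z$-coefficients and use Poincaré–Lefschetz duality $H_k(W;\Z) \cong H^{4-k}(W,\partial W;\Z)$ together with $H_k(W,\partial W;\Z) \cong H^{4-k}(W;\Z)$. Since $\partial W = Y$ is a $\Z HS^3$, the sequence near degree $1$ reads
\[
H_2(W;\Z) \to H_2(W,\partial W;\Z) \to H_1(\partial W;\Z) \to H_1(W;\Z) \to H_1(W,\partial W;\Z).
\]
We have just shown $H_2(W;\Z) = 0$ and $H_1(Y;\Z) = 0$, so $H_2(W,\partial W;\Z) = 0$ as well, hence by duality $H^2(W;\Z) = 0$, consistent with the above. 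The punchline is about $H_1(W,\partial W;\Z) \cong H^3(W;\Z)$: from the sequence $0 = H_1(\partial W) \to H_1(W;\Z) \to H_1(W,\partial W;\Z)$ we get that $H^3(W;\Z) \cong H_1(W,\partial W;\Z)$ contains (in fact surjects onto, and by a dimension/torsion count equals up to the cokernel) the nontrivial torsion group $H_1(W;\Z)$. So $H^3(W;\Z) \neq 0$.

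Now I would translate "$H^3(W;\Z) \neq 0$" into "$W$ has a $3$-handle." Take any handle decomposition of $W$ built on $\partial W \times I$ (i.e., relative to the boundary, or equivalently a handle decomposition of $W$ with handles of index $0,1,2,3,4$; since $W$ is connected with connected nonempty boundary we may assume there are no $4$-handles and a single $0$-handle, or better, work with a handle decomposition of $W$ relative to a collar of $\partial W$, which has handles only of index $1,2,3,4$, and no $4$-handles). If $W$ had \emph{no} $3$-handles in such a decomposition, then the cellular cochain complex computing $H^*(W, \partial W;\Z)$ (equivalently $H^*(W;\Z)$ via duality by turning the handle decomposition upside down) would have a zero group in the relevant degree, forcing $H^3(W;\Z) = 0$, a contradiction. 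Concretely: dualizing the handle decomposition of $W$ rel $\partial W$ (with handles of index $1,2,3,4$ and no index-$4$ handles) gives a handle decomposition of $W$ rel $\emptyset$ with handles of index $0,1,2,3$ and no index-$0$... — the cleanest statement is that the $k$-handles of $W$ rel $\partial_- W$ become $(4-k)$-handles of $W$ rel $\partial_+ W$; a $3$-handle of $W$ in the original sense contributes a generator to the relevant chain group whose absence kills $H_1(W,\partial W;\Z) \cong H^3(W;\Z)$. Since that group is nonzero, a $3$-handle must be present. The main obstacle in writing this up carefully is purely bookkeeping: being scrupulous about which handle decomposition (absolute versus relative to the boundary, which "end" the handles are attached from) one uses, so that "having a $3$-handle" is an unambiguous statement and the duality identification of chain groups with (co)homology is correctly oriented. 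Once that is fixed, every step above is a routine diagram chase in the long exact sequence plus Poincaré–Lefschetz duality, exactly as the paper says.
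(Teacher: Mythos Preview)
Your approach is exactly what the paper indicates---the paper gives no proof beyond the remark that the lemma ``can be proven easily by looking at the long exact homology sequence,'' and your argument is the natural way to flesh that out: show $H_1(W;\Z)$ carries nontrivial torsion, use $H_1(\partial W;\Z)=0$ to inject it into $H_1(W,\partial W;\Z)$, and conclude that the dual handle decomposition must have a $1$-cell, i.e.\ the original decomposition has a $3$-handle.

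One correction, though it does not damage your conclusion: your claim that $H_2(W;\Z)=0$ is false. The group $H^2(W;\Z)$ is \emph{not} free---by the universal coefficient theorem its torsion is $\mathrm{Ext}(H_1(W;\Z),\Z)\cong H_1(W;\Z)$, which is precisely the nonzero torsion you found. In fact the long exact sequence (using $H_2(\partial W)=H_1(\partial W)=0$) gives $H_2(W;\Z)\cong H_2(W,\partial W;\Z)\cong H^2(W;\Z)\cong H_1(W;\Z)\neq 0$. Fortunately you never actually use $H_2(W)=0$ in the decisive step: the injection $H_1(W;\Z)\hookrightarrow H_1(W,\partial W;\Z)$ follows immediately from $H_1(\partial W)=0$ alone, and that is all you need to force a $3$-handle. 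So delete the paragraph claiming $H_2=0$ and the argument goes through cleanly.
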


 Recall that we call a integral homology cobordism from $Y_0$ to $Y_1$ a $\Z$-\textit{ribbon cobordism} if this integer homology cobordism is achieved by attaching 1- and 2-handles and no 3-handle to $Y_0\times [0,1]$ along $Y_0\times \{1\}$. We also indicate such a cobordism by saying $Y_0$ is \textit{ribbon cobordant} to $Y_1$. Note that this relation is a partial ordering on 3-manifolds and not necessarily a symmetric relation. (We can similarly define $\Q$-ribbon homology cobordism.)

  Lemma~\ref{3handle} implies that if $\Sigma(2,3,7)$ bounds a $\Q HB^4$ then it cannot be Stein as every handle decomposition has a 3-handle which contradicts a result of Eliashberg \cite{yasha91}. From the previous discussion we can see that the same conclusion is true for any 3-manifold $Y$ that is integer homology cobordant to $\Sigma(2,3,7)$. So it is natural to ask if there exists a 3-manifold $Y$ that  is rationally cobordant to $\Sigma(2,3,7)$ and it bounds a rational homology Stein ball. we know that $S^3$ is rationally cobordant to $\Sigma(2,3,7)$ \cite{FS84}. But such a cobordism must have a 3-handle. So a modified question would be: Does there exist a 3-manifold $Y$ such that there is a rational ribbon homology cobordism from $\Sigma(2,3,7)$ to $Y$ and $Y$ bounds a rational Stein ball? 
 
 \begin{theorem}\label{cstein}
 If $X$ is an oriented compact 4-manifold with connected boundary $\partial{X}= Y$ and $b_1(X)=0$ then there exists a Stein 4-manifold $X'$ with boundary $\partial X'=Y'$  such that there is a rational ribbon homology cobordism from $Y$ to $Y'$ and $b_2(X)=b_2(X')$.
\end{theorem}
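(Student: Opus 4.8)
The plan is to start from the 4-manifold $X$ with $\partial X = Y$ and $b_1(X) = 0$, and modify it by handle attachments to turn it into a Stein domain without changing $b_2$, while only altering the boundary $Y$ by a rational ribbon homology cobordism. First I would fix a handle decomposition of $X$ built on a collar of $Y$, i.e., think of $X$ as $Y \times [0,1]$ with 1-, 2-, and 3-handles attached to $Y \times \{1\}$ (we may assume no 0- or 4-handles since $\partial X$ is connected and we can turn the decomposition upside down; the 3-handles are the problematic ones). Since $b_1(X) = 0$, the first homology is killed, which is what will let the 1-/2-handle part behave like a rational homology cobordism. The 3-handles are exactly the obstruction to being Stein (by Eliashberg's characterization, Stein domains of dimension 4 have handles only of index $\le 2$), so the real work is to \emph{trade} each 3-handle for an extra 1-handle attached further out in the collar, at the cost of changing the boundary within its rational homology cobordism class.

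The key mechanism I would use is the standard ``turning a 3-handle into a 1-handle'' move: a 3-handle attached along an $S^2 \subset Y'$ can be replaced, up to a cobordism of the boundary, by attaching a 1-handle to the other side — concretely, if $Y'' $ is obtained from $Y'$ by surgery on that $S^2$, then there is a cobordism from $Y'$ to $Y''$ given by a single 1-handle (reading the 3-handle's trace upside down), and the composite has the homological type we want. So the strategy is: process the handles of $X$ in order of index; the 1- and 2-handles give a cobordism $W_{12}$ from $Y$ to some $Y_1$ which (using $b_1(X) = 0$ and counting) is a rational homology cobordism realized \emph{without} 3-handles — hence a rational ribbon homology cobordism — and the result $X_1 = X \setminus (\text{3-handles})$ is a 4-manifold with $\partial X_1 = Y_1$, $b_2(X_1) = b_2(X) + (\#\text{3-handles})$ if we're not careful, so instead I keep track more carefully: removing the 3-handles from $X$ leaves a 4-manifold with the same $b_2$ only if the 3-handles were homologically trivial, which they are after the 1-/2-handle stage kills $H_1$. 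Then I would run Eliashberg's theorem / the Gompf handle-calculus argument to further isotope/slide the remaining 1- and 2-handles so the resulting $X'$ carries a Stein structure (attaching 1-handles as standard Stein 1-handles, 2-handles along Legendrian knots with framing one less than the contact framing), adjusting 2-handle framings by blowing up and down if necessary — but blow-ups change $b_2$, so one must arrange the framings work out, or absorb the corrections into the ribbon cobordism on the boundary. The boundary $Y'$ of this Stein $X'$ differs from $Y_1$ (hence from $Y$) by the composition of the surgeries/handle moves, all of which can be arranged to be 1- and 2-handle attachments realizing a rational homology cobordism with no 3-handles, i.e., a rational ribbon homology cobordism from $Y$ to $Y'$, and $b_2$ is preserved throughout.

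The main obstacle I anticipate is the bookkeeping that simultaneously (a) preserves $b_2$ exactly, (b) ensures the boundary cobordism is a genuine \emph{rational} homology cobordism (so that $H_*(W, Y_i; \Q) = 0$), and (c) ensures it is \emph{ribbon}, i.e., genuinely has no 3-handles in the composite — because naively turning a 3-handle into a 1-handle on the far side of the collar does produce extra handles, and one has to verify their homological contribution cancels rationally and that the 1-handles can be absorbed. The condition $b_1(X) = 0$ is doing the essential work here: it forces the 1-handles to be algebraically cancelled by 2-handles, which is what makes the homology cobordism rational rather than just a cobordism. Making the final 4-manifold Stein via Eliashberg's criterion is then comparatively routine, modulo the framing adjustments, which I would handle by the standard trick of realizing a too-large framing via extra $1$-$2$ handle cancelling pairs rather than blow-ups, so as not to disturb $b_2$.
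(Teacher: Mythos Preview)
There is a genuine gap in your bookkeeping. You declare that the cobordism $W_{12}$ built from \emph{all} the 1- and 2-handles of $X$ (read from the $Y$ side) is a rational homology cobordism. This is false as soon as $b_2(X)>0$. In the relative chain complex $C_2\to C_1$ for $(W_{12},Y)$ the ranks are $n_2$ and $n_3$ (using $n_i$ for the number of $i$-handles in the original decomposition of $X$), and an Euler characteristic count gives $\dim H_2(W_{12},Y;\Q)-\dim H_1(W_{12},Y;\Q)=n_2-n_3=b_2(X)+n_1\ge b_2(X)$. So $H_2(W_{12},Y;\Q)\neq 0$ whenever $b_2(X)>0$, and $W_{12}$ is not a rational homology cobordism. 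You noticed something was off (``$b_2(X_1)=b_2(X)+\#\text{3-handles}$ if we're not careful'') but did not actually resolve it.

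The paper's proof fixes exactly this point: after turning $X$ upside down, one takes only a \emph{minimal} collection of 2-handles, just enough to kill the rational $H_1$ contributed by the 1-handles (this is where $b_1(X)=0$ is used). That sub-cobordism $M_2$ has equal numbers of 1- and 2-handles with full-rank boundary map, hence is a genuine rational ribbon homology cobordism from $Y$ to some $Y_1$. The \emph{remaining} 2-handles, together with the 3-handles and 4-handle, are then flipped back to give a 4-manifold $X_1$ built only from 0-, 1-, and 2-handles with $\partial X_1=Y_1$ and $b_2(X_1)=b_2(X)$. Your ``trade a 3-handle for a 1-handle'' intuition is the right picture, but the split between the ribbon cobordism and the candidate Stein piece has to be made at this minimal set of 2-handles, not after all of them. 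Finally, your framing adjustment (``extra 1--2 handle cancelling pairs'') is on the right track, but the paper uses a specific mechanism: attach a Mazur-type cork along each offending 2-handle and perform a cork twist, which raises the contact framing by one without touching the smooth framing or $b_2$; the corks themselves assemble into a further ribbon $\Z$-homology cobordism $M_3$ from $Y_1$ to $Y'$. Without naming that move your sketch does not explain why the adjustment stays within the ribbon rational homology cobordism class of the boundary.
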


As discussed above, not every smooth filling $X$ of a $3$-manifold $Y$ can be given a Stein structure, or indeed there are $Y$ that do not even admit any Stein fillings.  But we can ask if there is a ribbon rational homology cobordism from $Y$ to a manifold $Y'$ that has a symplectic filing $X'$ so that $X'$ shares some of the algebraic properties of $X$. For example if we let 
\[
b_2^F(Y)=\min\{b_2(X) \mid \partial X = Y\},
\]
then we can ask the following.
\begin{question}\label{Q2}
Let $Y$ be a 3-manifold. Is there a ribbon rational homology cobordism to $Y'$ such that $b_2^F(Y')=b_2^F(Y)$ and $Y'$ has a Stein filling which realized $b_2^F(Y')$?
\end{question}
While this is an interesting question that we cannot answer in full generality, if we restrict to the case of fillings with $b_1=0$, then the above Theorem~\ref{cstein} says the answer is yes. Or more explicitly, if we set
\[
b_2^{F,0}(Y)= \min\{b_2(X) | \partial X = Y, b_1(X)=0\},
\]
then Theorem~\ref{cstein} says that given any $4$-manifold $X$ with boundary $Y$ there is a ribbon rational homology cobordism from $Y$ to a manifold $Y'$ such that $Y'$ has a Stein filling and $b_2^{F,0}(Y')=b_2^{F,0}(Y)$. In general, we end by asking, given a $4$-manifold $X$ with boundary $Y$, when can one find an integral or rational cobordism from $Y$ to $Y'$ and a symplectic (or Stein) filling $X'$ of $Y'$ such that various algebraic invariants of $X'$ agree with those of $X$? For example we could ask for the same homology, signature, and fundamental group, among other things. Although we do not have an answer for every 3-manifolds but for $\Q HS^3$ we can answer the above Question~\ref{Q2}.

\begin{theorem}\label{AQ2}
 If $X$ is a compact 4-manifold with connected boundary $\partial X= Y$ a $\Q HS^3$, then there exists a Stein 4-manifold $X'$ with boundary $\partial X' = Y'$ such that the intersection form of $X$ is isomorphic to the intersection form of $X'$ and there is a rational ribbon homology cobordism from $Y$ to $Y'$.

\end{theorem}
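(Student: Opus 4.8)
The plan is to refine the construction behind Theorem~\ref{cstein} so that it preserves the \emph{integral} intersection form, not merely $b_2$. The one structural input we need from the boundary being a rational homology sphere is that the intersection form $Q_X$ on $H_2(X;\Z)/\mathrm{tors}$ is then nondegenerate over $\Q$: in the long exact sequence of the pair $(X,Y)$ the terms $H_2(Y;\Q)$ and $H_1(Y;\Q)$ vanish, so $H_2(X;\Q)\cong H_2(X,Y;\Q)\cong H^2(X;\Q)$, and this isomorphism is exactly the intersection pairing; in particular $\det Q_X\neq 0$, so the $2$-handlebodies we build below will again have $\Q HS^3$ boundary.

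First I would realize $Q_X$ as the intersection form of a Stein domain. Present $Q_X$ in some basis as the linking matrix of a framed link $L=K_1\cup\dots\cup K_r\subset S^3$, so that the handlebody $X_L=B^4\cup(\text{$2$-handles along }L)$ has $Q_{X_L}\cong Q_X$. The only obstruction to a Stein structure on $X_L$ is that a framing $n_i=(Q_X)_{ii}$ may exceed $\overline{\mathrm{tb}}(K_i)-1$. I would remove this by first applying a $\mathrm{GL}_r(\Z)$-congruence to $Q_X$ (realized by handle slides, which change neither $X_L$ nor $\partial X_L$) and then choosing the components $K_i$ to be knots of large Seifert genus carrying the prescribed linking numbers --- for instance large positive torus knots or iterated positive cables --- so that $\overline{\mathrm{tb}}(K_i)-1\ge n_i$ for every $i$. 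A Legendrian realization of $L$, stabilized down to the framings $n_i$, then exhibits $X_L$, by Eliashberg's criterion, as a Stein domain $P$ with $Q_P\cong Q_X$ and $\partial P$ a $\Q HS^3$. (The only point to check here is that the genus-boosting can be carried out for all components simultaneously while fixing every off-diagonal linking number; this is routine but should be written out.)

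Next I would produce a rational ribbon homology cobordism from $Y$ to a $3$-manifold $Y'$ bounding a Stein domain with intersection form $Q_X$, by running and refining the mechanism of Theorems~\ref{cstein} and~\ref{deg}. First note that if $W\colon Y\to Y'$ is any rational ribbon homology cobordism, then --- being built from $Y\times[0,1]$ by $1$- and $2$-handles only with $\chi(W)=0$ --- it has equal numbers of $1$- and $2$-handles cancelling algebraically; hence by excision $H_*(\widetilde X,X;\Z)\cong H_*(W,Y;\Z)$ is torsion (it vanishes over $\Q$), so $\widetilde X:=X\cup_Y W$ satisfies $H_2(\widetilde X;\Z)/\mathrm{tors}\cong H_2(X;\Z)/\mathrm{tors}$ compatibly with intersection forms, i.e.\ $Q_{\widetilde X}\cong Q_X$, and $\partial\widetilde X=Y'$. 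It therefore suffices to arrange that $\widetilde X$ itself be Stein. The obstruction is the $3$-handles of $X$ (Stein domains have none), which after turning the decomposition over become $1$-handles of a cobordism to be traded against $2$-handles; putting those $2$-handles in Stein position requires pushing their attaching curves to large Thurston--Bennequin number, which changes the surgered boundary, and that change must be compensated by an algebraically-cancelling $1$/$2$-handle pair inside $W$, so that $W$ stays a ribbon rational homology cobordism and $Q_X$ is preserved; the boundary then lands on the desired $Y'$, which bounds the Stein domain $\widetilde X$.

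The main obstacle is exactly this compensation step. Raising the Thurston--Bennequin number of an attaching curve enough to make its handle Stein --- equivalently, complicating a surgery knot as in the first step --- changes the underlying three-manifold, on its own, by more than a rational homology cobordism; the content of the proof is to ``pay for'' that complication using genuinely trivial $1$/$2$-handle-pair additions to the cobordism rather than by altering $Y$, and to do so uniformly for all handles at once while never introducing a $3$-handle or disturbing the rational homology cobordism condition. Checking that this bookkeeping closes up --- so that one ends with a Stein domain of intersection form exactly $Q_X$ whose boundary is a ribbon rational homology cobordism away from $Y$ --- is the heart of the matter, and it is where the rational homology sphere hypothesis is really used: nondegeneracy of $Q_X$ and of all intermediate forms is what forces the candidate cobordisms to be rational homology cobordisms at all. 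The argument thus runs in parallel with, and refines, the proofs of Theorems~\ref{cstein} and~\ref{deg}.
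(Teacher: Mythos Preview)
There is a genuine gap. Your reduction ``it suffices to arrange that $\widetilde X := X\cup_Y W$ itself be Stein'' cannot succeed in general. Since $W$ is a rational homology cobordism, excision gives $H_*(\widetilde X,X;\Q)\cong H_*(W,Y;\Q)=0$, so $b_3(\widetilde X)=b_3(X)$; and because $\partial X=Y$ is a $\Q HS^3$, Poincar\'e--Lefschetz duality together with the long exact sequence of the pair $(X,Y)$ gives $b_3(X)=b_1(X)$. Hence whenever $b_1(X)>0$ --- for instance $X=(S^1\times S^3)\setminus B^4$ with $\partial X=S^3$ --- the manifold $\widetilde X$ has $b_3>0$ and therefore admits no handle decomposition without $3$-handles, so it can never be Stein. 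Your discussion of ``trading'' the $3$-handles of $X$ after turning the decomposition over does not repair this: attaching $1$- and $2$-handles on top of $X$ via $W$ cannot kill $H_3$. Separately, your first step --- building some Stein $2$-handlebody $P$ with $Q_P\cong Q_X$ by choosing high--genus components --- is correct as a standalone fact but is never connected to the rest of the argument: you offer no mechanism linking $\partial P$ to $Y$ by any cobordism, and your Part~2 does not use $P$ at all.

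The paper proceeds differently and deals with exactly this obstruction before invoking Theorem~\ref{cstein}. It first surgers a minimal set of $1$-handles generating the free part of $H_1(X;\Q)$ (in Kirby calculus, replacing those dotted circles by $0$-framed unknots) to obtain $\bar X$ with $b_1(\bar X)=0$ and $\partial\bar X=Y$. The $\Q HS^3$ hypothesis is used here, not for nondegeneracy of $Q_X$ as you suggest, but to conclude $b_3(\bar X)=0$ so that the existing $3$-handles must cancel the new $0$-framed $2$-handles in rational homology; this is what guarantees the intersection form is unchanged. Only then does one run the Theorem~\ref{cstein} machine on $\bar X$: the upside-down $3$-handles together with algebraically cancelling $2$-handles are sliced off as a rational ribbon cobordism $M_2$ from $Y$ to some $Y_1$, the remaining piece $X_1$ is a $2$-handlebody with $Q_{X_1}\cong Q_{\bar X}\cong Q_X$, and the cork attachments of Step~\ref{step2} (followed by cork twists) convert $X_1$ into the Stein domain $X'$. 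Note in particular that $X'$ is \emph{not} $X\cup W$; it is (up to the cork-twist homeomorphism) a codimension-zero piece of a cork-enlargement of $\bar X$, and the ribbon cobordism from $Y$ to $Y'$ is the complementary piece.
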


\begin{corollary}
There exists a 3-manifold $Y'$ and a rational ribbon homology cobordism from $\Sigma(2,3,7)$ to  $Y'$ such that $Y'$ has a rational ball Stein filling.
\end{corollary}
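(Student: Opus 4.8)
The plan is to apply Theorem~\ref{AQ2} to the rational homology ball that $\Sigma(2,3,7)$ is known to bound. First I would recall the theorem of Fintushel and Stern \cite{FS84}: $\Sigma(2,3,7)$ bounds a rational homology $4$-ball $X$. In particular $\Sigma(2,3,7)$ is a $\Q HS^3$, and since $H_2(X;\Q)=0$ its intersection form is the rank-zero form.

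Next I would feed $Y=\Sigma(2,3,7)$ and this $X$ into Theorem~\ref{AQ2}. The output is a Stein $4$-manifold $X'$ with $\partial X'=Y'$ whose intersection form is isomorphic to that of $X$ --- hence also the rank-zero form, so $b_2(X')=0$ --- together with a rational ribbon homology cobordism from $\Sigma(2,3,7)$ to $Y'$. This cobordism certifies the ``rational ribbon homology cobordism'' part of the statement, and $X'$ is the desired Stein filling of $Y'$; the only thing left is to verify that $X'$ is actually a \emph{rational ball}, not merely a manifold with $b_2=0$.

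For that final step I would argue as follows. A rational homology cobordism preserves rational homology, so $Y'$ is a $\Q HS^3$. Being Stein, $X'$ admits a handle decomposition with no handles of index exceeding $2$, so $H_3(X';\Q)=H_4(X';\Q)=0$. Plugging $b_2(X')=0$ and these vanishings --- together with Lefschetz duality $H_k(X',Y';\Q)\cong H^{4-k}(X';\Q)$ for the relative groups --- into the long exact sequence of the pair $(X',Y')$ over $\Q$ forces $H_1(X';\Q)\cong H_1(Y';\Q)=0$. Hence $X'$ is a $\Q HB^4$, completing the proof.

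I do not expect any real obstacle here: the content is entirely in Theorem~\ref{AQ2}, and the only subtlety is the last homological bookkeeping, ensuring that ``intersection form of rank zero, Stein, and boundary a $\Q HS^3$'' upgrades to ``rational homology ball.''
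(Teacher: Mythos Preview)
Your argument is correct and is exactly the approach the paper intends: the corollary is stated without proof, but the subsequent remark (``The above corollary is true if we replace $\Sigma(2,3,7)$ with any 3-manifold which bounds a rational ball'') makes clear that one simply feeds the Fintushel--Stern rational ball into the preceding theorem. Your extra step verifying that $X'$ is genuinely a $\Q HB^4$ (rather than just $b_2=0$) via Lefschetz duality and the long exact sequence is a worthwhile detail that the paper leaves implicit; note that you could equally invoke Theorem~\ref{cstein} directly, since a rational ball already has $b_1=0$.
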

\begin{remark}
 The above corollary is true if we replace $\Sigma(2,3,7)$ with any 3-manifold which bounds a rational ball. A large class of such manifolds is provided by Akbulut and Larson \cite{AL18}.
\end{remark}

We end with a slightly more technical result, on which several of the above results depend. 
\begin{theorem} \label{c1} Given any closed oriented 3-manifold $Y$ there exists a Stein fillable 3-manifold $Y'$ and is a $\Z$ ribbon homology cobordism $W$ from $Y$ to $Y'$ which is obtained from $Y\times [0,1]$ by attaching  a single pair of  algebraically cancelling 1- and 2-handle. Moreover, this is an invertible cobordism, that is there is a cobordism $W'$ from $Y'$ to $Y$ such that $W\cup_{Y'} W'$ is diffeomorphic to $Y\times [0,1]$. In particular $Y'$ embeds in $Y\times  [0,1 ]$.

\end{theorem}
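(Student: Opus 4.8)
The plan is to present $Y$ as the boundary of a $4$--manifold built from a $0$--handle and $2$--handles, and then to repair the framings that obstruct a Stein structure by attaching one algebraically cancelling $1$--$2$ handle pair and doing handleslides. By Lickorish--Wallace \cite{Lickorish,Wallace}, write $Y=\partial Z_0$ with $Z_0=B^4\cup(\text{$2$--handles along a framed link }L\subset S^3)$, where the component $L_i$ has surgery coefficient $n_i$; make $L$ Legendrian for $\xi_{std}$ on $S^3$. Call $L_i$ \emph{good} if $n_i\le\overline{\mathrm{tb}}(L_i)-1$, and then fix a Legendrian representative with $\mathrm{tb}(L_i)=n_i+1$ so that its $2$--handle is a Stein handle; otherwise $n_i\ge\overline{\mathrm{tb}}(L_i)$, call $L_i$ \emph{bad}, and (after stabilizing) fix a Legendrian representative with $\mathrm{tb}(L_i)\le n_i$ and $\mathrm{tb}(L_i)\equiv n_i\pmod2$. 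Let $Z$ be $Z_0$ with all $2$--handles attached; then $Z$ is a Stein handlebody \emph{except} that each bad $2$--handle carries the framing $n_i$, which is strictly larger than $\mathrm{tb}(L_i)-1$, the framing required of a Stein $2$--handle along that Legendrian representative.

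Now build the cobordism. Attach to $Y\times\{1\}\subset Y\times[0,1]$ a single $1$--handle $h_1$ and a single $2$--handle $h_2$ along a knot $\gamma$ with the following properties (we describe $\gamma$, after a handle rearrangement bringing $h_1$ down to the $0$--handle level of $Z\cup h_1\cup h_2$, as a knot in $(S^1\times S^2,\xi_{std})=\partial(B^4\cup h_1)$ that links $L$): (i) $\gamma$ meets the belt $S^2$ of $h_1$ algebraically $\pm1$, so that $h_1$ and $h_2$ algebraically cancel; (ii) $\gamma$ is Legendrian and the framing of $h_2$ is $\mathrm{tb}(\gamma)-1$; (iii) $\mathrm{lk}(L_i,\gamma)=\tfrac12(\mathrm{tb}(L_i)-n_i)$ for every bad $L_i$ --- a nonpositive integer, by the choices of the previous paragraph; (iv) $\gamma$ is put in the standard shape which makes the resulting cobordism $W$ invertible. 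Requirements (i)--(iii) only prescribe homology classes and linking numbers and are met by starting from a $\gamma_0$ that crosses $h_1$ once and is unlinked from $L$, adjusting its linking with the $L_i$ by local clasps, and then Legendrian--realizing; the model in (iv) only concerns the strands of $\gamma$ running through $h_1$ and is compatible with (i)--(iii). Let $Y'$ be the outgoing end of $W$.

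To see that $Y'$ is Stein fillable, set $Z':=Z\cup_Y W$, so $\partial Z'=Y'$, and fix the handle decomposition $Z'=B^4\cup h_1\cup(\text{$2$--handles along }L)\cup(\text{$2$--handle along }\gamma)$ coming from the rearrangement above. Handleslide every bad $L_i$ over $\gamma$ --- all over the single $2$--handle $h_2$ --- along a Legendrian band. The handleslide framing formula together with (ii) and (iii) gives that the new attaching circle $L_i'$ is a Legendrian knot whose framing equals $\mathrm{tb}(L_i')-1$. Hence $Z'$ has a handle decomposition with a $0$--handle, the single standard $1$--handle $h_1$, and a family of $2$--handles --- the good $L_j$, the corrected $L_i'$, and $\gamma$ --- attached along a Legendrian link each component of which is framed by its Thurston--Bennequin number minus one. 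By Eliashberg's characterization of Stein handlebodies \cite{yasha91}, $Z'$ admits a Stein structure, so $Y'=\partial Z'$ is Stein fillable.

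Finally, the cobordism properties: $W$ is built from $Y\times[0,1]$ with just a $1$--handle and a $2$--handle which, by (i), algebraically cancel, so the relative chain complex of $(W,Y)$ is $\Z\xrightarrow{\pm1}\Z$; thus $H_*(W,Y;\Z)=0$, and dually $H_*(W,Y';\Z)=0$, so $W$ is a $\Z$--ribbon homology cobordism from $Y$ to $Y'$ with no $3$--handles on the $Y$ side. By (iv), the standard construction of an invertible cobordism from an algebraically cancelling $1$--$2$ pair supplies $W'$ from $Y'$ to $Y$ with $W\cup_{Y'}W'$ diffeomorphic to $Y\times[0,1]$; in particular $Y'\subset W\subset Y\times[0,1]$ embeds. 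I expect the main obstacle to be point (iv): producing $\gamma$ in a form that at once makes $W$ an invertible cobordism and realizes the linking data of (iii). A secondary, purely bookkeeping, difficulty is to follow parities and framings carefully through the stabilizations, the rearrangement, and the handleslides so that every corrected $2$--handle lands precisely on the Thurston--Bennequin-minus-one framing needed for a Stein handle.
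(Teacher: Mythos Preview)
Your approach diverges from the paper's in a way that leaves two real gaps.

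The paper's proof does not try to fix all bad $2$--handles by one clever $\gamma$. Instead it first runs Step~1 of the proof of Theorem~\ref{ttop}: one blows up meridians of the $K_i$ (this does not change $Y$) and then performs one further blow--up so that, after Legendrian realization, \emph{every} $2$--handle except a single $(-1)$--framed unknot $K$ is already a Stein handle. The cobordism $W_1$ is then a single explicit Mazur cork attached along $K$. Stein fillability of $Y'$ is \emph{not} obtained by handleslides inside $Z\cup W_1$; rather one performs the cork twist to produce a $4$--manifold $W_2'$ that is only homeomorphic to $W\cup W_1$, and it is this twisted manifold that carries the Stein handle decomposition. Invertibility is checked by an explicit Kirby calculation on the double of the Mazur cork.

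Your Step ``Stein fillability'' has a genuine problem. Conditions (ii) and (iii) pin down the \emph{smooth} framing of $L_i'$ after the slide, namely $n_i'=n_i+(\mathrm{tb}(\gamma)-1)+2\,\mathrm{lk}(L_i,\gamma)=\mathrm{tb}(L_i)+\mathrm{tb}(\gamma)-1$; they do not pin down $\mathrm{tb}(L_i')$, which depends on the Legendrian band. For any fixed Legendrian handleslide model, $\mathrm{tb}(L_i')-n_i'$ is determined by the model and is independent of the original discrepancy $n_i-\mathrm{tb}(L_i)$; thus a single slide over one $\gamma$ cannot simultaneously repair bad components whose discrepancies differ. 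In short, the ``handleslide framing formula together with (ii) and (iii)'' gives the smooth framing but not the contact framing, so the conclusion that $L_i'$ is a Stein handle is unjustified. The paper sidesteps this entirely: the blow--up trick reduces to \emph{one} bad handle with a specific discrepancy, and the subsequent cork twist (not a handleslide) raises the contact framing of $K$ explicitly (cf.\ Figure~\ref{stein}).

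Your own concern about (iv) is also real. Invertibility is not automatic for an arbitrary algebraically cancelling $1$--$2$ pair; the paper's doubling argument (Figure~\ref{trivial}) uses the specific Mazur shape of the cork $2$--handle, in which the $0$--framed dual meridian can be used to undo the clasp so that the $2$--handle passes over the $1$--handle geometrically once. For a $\gamma$ that is forced to clasp each bad $L_i$ a prescribed number of times, this simplification is no longer local to the cork and there is no reason the double should reduce to $Y\times[0,1]$.

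The missing idea, then, is the blow--up reduction of Step~1: it costs nothing on the boundary, collapses the problem to a single bad unknot, and makes both the Stein--fillability (via one explicit cork twist) and the invertibility (via the explicit Mazur double) fall out of concrete Kirby pictures.
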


\begin{remark}
 Yasui has pointed out to the author that, while they do not talk about cobordisms, this result, without the statement of only needing a single 1- and 2-handle pair, can be proven by putting together several results from \cite{Yasui13}.
\end{remark}

{\bf Acknowledgements.} The author is grateful to Chris Gerig, Robert Gompf, Kyle Hayden, Tye Lidman, Hyun Ki Min, Maggie Miller, Lisa Piccirillo, Danny Ruberman, and B\"ulent Tosun for various helpful discussions and comments on the first draft. The author would also like to thank Jen Hom and Tom Mark for help with aspects of Heegard Floer homology. The author would like to show his gratitude to Kouchi Yasui for mentioning the paper \cite{Yasui13} and having helpful discussions. The author is grateful to Marco Golla for asking the question which became Theorem~\ref{AQ2} and also for carefully reading the earlier drafts and making various helpful comments. And finally the author would like to show his gratitude to his advisor John Etnyre for constant help and support.

\section{Background}\label{background}
\subsection{Contact geometry} \label{contact}
  Recall that a \textit{(co-orientable) contact  structure} $\xi$ on an oriented 3-manifold $Y$ is the kernel of 1-form $\alpha \in \Omega ^1(Y)$ such that $\alpha \wedge d\alpha$ is non-degenerate. Geometrically a contact structure on a 3-manifold is a distribution of a 2-plane fields on the manifold that is not tangent to any embedded surface in the manifold. Darboux's theorem says that every contact 3-manifold $(Y,\xi)$ is locally contactomorphic to $(\R ^3,\xi_{std}= ker(dz - ydx))$. All orientable 3-manifolds admit contact structures. A knot $L \subset (Y,\xi)$ is called \textit{Legendrian} if at every point of $L$ the tangent line to $L$ lies in the contact plane at that point. A Legendrian knot $L$ in a contact manifold $(Y,\xi)$ has a standard neighborhood $N$ and a framing $fr_\xi$ given by the contact planes. If $L$ is null-homologous then $fr_\xi$ relative to the Seifert framing is the \textit{Thurston--Bennequin} invariant of $L$.  If one does $fr_\xi-1$ surgery on $L$ by removing $N$ and gluing back a solid torus so as to effect the desired surgery, then there is a unique way to extend $\xi|_{Y-N}$ over the surgery torus so that it is tight on the surgery torus. The resulting contact manifold is said to be obtained from $(Y,\xi)$ by \textit{Legendrian surgery} on $L$. 
  
  A Legendrian knot $L$ in $(\R^3, \xi_{std})$ projects to a closed curve $\gamma$ in the $xz$--plane which also known as \textit{front projection} of $L$. The curve $\gamma$ uniquely determines the Legendrian knot $L$ which can be reconstructed by setting $y(t)$ as the slope of $\gamma (t)$. Thus, at a crossing, the most negative slope curves always stays at front. There are two types of cusps singularity possible when $dz/dx =0$ whcih are called left cusps and right cusps. See Figure~\ref{front pr}. 
  
  Looking at an oriented front projection one can compute the Thurston--Bennequin invariant of a Legendrian knot $tb(K)= \text{writhe}(K)- \#\{\text{left  cusps}\}$. For more details we refer \cite{geiges} and \cite{gompf98}. 
  
  A contact 3-manifold $(Y,\xi)$ is called \textit{overtwisted} if there exists a Legendrian unknot with Thurston--Bennequin number $0$, otherwise it is called \textit{tight}.
  
  \begin{figure}[htbp]
	\begin{center}
	
  \begin{overpic}[scale=0.6,  tics=20]{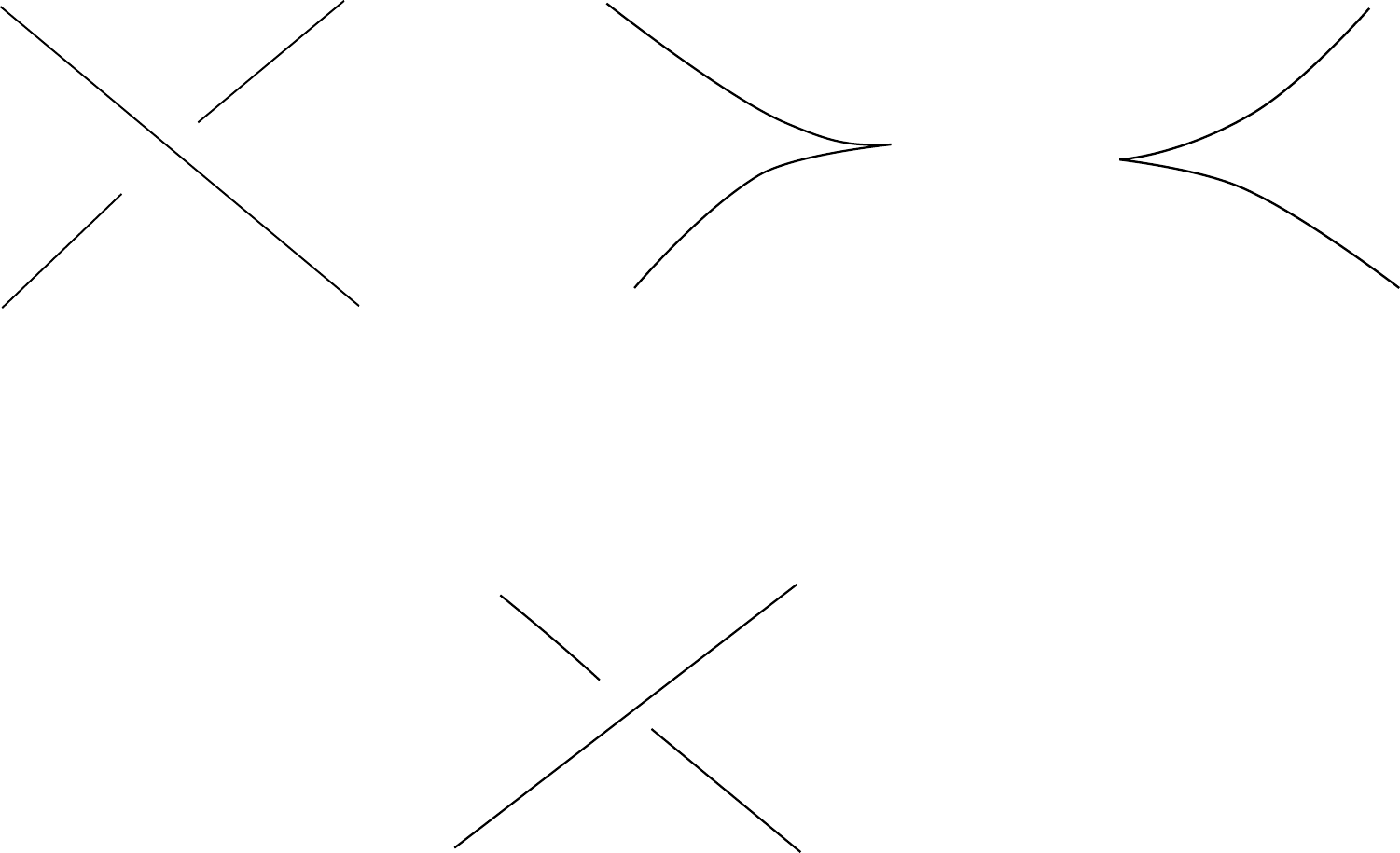}

\end{overpic}

\caption{The top row indicates the correct crossing and the cusps in the front projection. And the bottom picture crossing will not occur in the front projection diagram. }
\label{front pr}
\end{center}
\end{figure}

\subsection{Symplectic fillings, cobordisms and caps} \label{dstein}

We recall that a compact symplectic manifold $(X,\omega)$ is a \textit{strong  symplectic\  filling} of $(Y,\xi)$ if $\partial X=Y$ and there is a vector field $v$ defined near $\partial X$ such that the Lie derivative of $\omega$ satisfies $\mathcal{L}_v \omega= \omega$, $v$ points out of $X$ and $\iota_v\omega$ is a contact form for $\xi$. Moreover, $(X,\omega)$ is a strong symplectic cap for $(Y,\xi)$ if it satisfies all the properties above, except $\partial X=-Y$ and $v$ points into $X$. We also say $(X,\omega)$ is a \textit{weak  filling} of $(Y,\xi)$ if $\partial X=Y$ and $\omega|_\xi>0$  (here all our contact structures are co-oriented). Similarly, $(X,\omega)$ is a weak cap of $(Y,\xi)$ if $\partial X=-Y$ and $\omega|_\xi>0$. We shall say that $(Y,\xi)$ is (strongly or weakly) \textit{semi-fillable} if there is a connected (strong or weak) filling $(X,\omega)$ whose boundary is disjoint union of $(Y,\xi)$ with an arbitrary non-empty contact manifold.

A \textit{symplectic  cobordism} from the contact manifold $(Y_-,\xi_-)$ to $(Y_+,\xi_+)$ is a compact symplectic manifold $(W,\omega)$ with boundary $-Y_-\cup Y_+$ where $Y_-$ is a \textit{concave} boundary component and $Y_+$ is \textit{convex}, this means that there is a vector field $v$ near $\partial W$ which points transversally inwards at $Y_-$ and transversally outwards at $Y_+$, and $\mathcal{L}_v \omega= \omega$. The first result we will need concerns when symplectic cobordisms can be glued together. 
\begin{lemma}\label{glue} \cite{etnyre96}
Let $(X_i,\omega_i)$ be a symplectic cobordism from $(Y_i^-,\xi_i^-)$ to $(Y_i^+,\xi_i^+)$, for $i=1,2$, and $(Y_1^+,\xi_1^+)$ is contactomorphic to $(Y_2^-, \xi_2^-)$. Then we can construct a symplectic cobordism $(X,\omega)$ from $(Y_1^-,\xi_1^-)$ to $(Y_2^+,\xi_2^+)$ such that $X$ is diffeomorphic to $X_1\cup_{Y_1^+}X_2.$
\end{lemma}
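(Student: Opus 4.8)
\textbf{Proof proposal for Lemma~\ref{glue} (gluing symplectic cobordisms).}

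The plan is to glue $X_1$ and $X_2$ along the convex end of $X_1$ and the concave end of $X_2$, using the Liouville vector fields near those boundary components to build collar neighborhoods in which the symplectic forms have a standard form, and then interpolate. First I would recall the standard collar model: since $v_1$ is a Liouville vector field for $\omega_1$ defined near $Y_1^+$, pointing transversally outward, its flow gives an embedding of $(Y_1^+ \times (-\epsilon, 0], d(e^t \alpha_1^+))$ into a neighborhood of $Y_1^+$ in $X_1$, where $\alpha_1^+ = (\iota_{v_1}\omega_1)|_{Y_1^+}$ is a contact form for $\xi_1^+$ and $t$ is the flow parameter. Symmetrically, the flow of $v_2$ near the concave end $Y_2^-$ gives an embedding of $(Y_2^- \times [0, \epsilon), d(e^t \alpha_2^-))$ into $X_2$, with $\alpha_2^-$ a contact form for $\xi_2^-$ (here I orient things so $t$ increases into $X_2$). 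The fact that $Y_1^+$ is convex and $Y_2^-$ is concave is exactly what makes the two collars fit together with the $t$-coordinate running monotonically across the junction.

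The one subtlety is that the contactomorphism $\phi : (Y_1^+, \xi_1^+) \to (Y_2^-, \xi_2^-)$ need not pull $\alpha_2^-$ back to $\alpha_1^+$; it only satisfies $\phi^* \alpha_2^- = f \alpha_1^+$ for some positive function $f$ on $Y_1^+$. So the key step is to absorb this conformal factor. I would do this by choosing $\epsilon$ small enough and then, on the collar $Y_1^+ \times (-\epsilon, \epsilon)$, replacing the naive product form with $d(e^{g(t, y)} \alpha_1^+)$ where $g$ is a smooth function, increasing in $t$, that equals $t$ near $t = -\epsilon$ and equals $t + \log f(y)$ (transported via $\phi$) near $t = \epsilon$; one checks $d(e^g \alpha_1^+)$ is symplectic precisely because $\partial g/\partial t > 0$ and $\alpha_1^+ \wedge d\alpha_1^+ > 0$ (this is the standard computation $d(e^g\alpha) = e^g(dg \wedge \alpha + d\alpha)$, and its square is $2 e^{2g} (\partial_t g)\, dt \wedge \alpha \wedge d\alpha$ up to terms that vanish, which is a volume form). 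This gives a symplectic form on $Y_1^+ \times (-\epsilon, \epsilon)$ that agrees with the collar model of $\omega_1$ on the left half and, after applying $\phi$, with the collar model of $\omega_2$ on the right half. Gluing $X_1 \setminus (\text{collar})$, this interpolating piece, and $X_2 \setminus (\text{collar})$ along these matching symplectic structures produces $(X, \omega)$ with $X$ diffeomorphic to $X_1 \cup_{Y_1^+} X_2$.

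Finally I would check that the result is genuinely a symplectic cobordism from $(Y_1^-, \xi_1^-)$ to $(Y_2^+, \xi_2^+)$: the Liouville vector fields $v_1$ near $Y_1^-$ and $v_2$ near $Y_2^+$ were untouched by the modification (which happened only in the interior collar), so $v_1$ still points transversally inward at $Y_1^-$ and $v_2$ still points transversally outward at $Y_2^+$, $\mathcal{L}_{v_i}\omega = \omega$ holds there, and the induced contact structures on the two ends are unchanged. The main obstacle — and really the only place where any work is needed — is handling the conformal factor $f$ via the interpolating function $g$ and verifying that the interpolated form stays symplectic throughout; everything else is the standard Liouville collar package. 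This is exactly the argument of Etnyre--Honda type gluing, so I would cite \cite{etnyre96} for the details rather than reproduce the calculation in full.
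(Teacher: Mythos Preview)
Your proof proposal is a correct sketch of the standard Liouville-collar gluing argument. However, the paper does not actually prove this lemma: it is stated with a citation to \cite{etnyre96} and no proof is given in the text. So there is nothing to compare against; your argument is essentially the one that would be found in the cited reference, and it would be appropriate to simply cite \cite{etnyre96} rather than reproduce it, as the paper does.
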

\noindent

Now recall that a \textit{Stein  domain} is a triple $(X,J,\psi)$ where $J$ is a complex structure on $X$ and $\psi : X \to \R$ is a proper plurisubharmonic function, that is a smooth function such that $\psi^{-1}(-\infty,c]$ is compact for all $c\in \R$ and $\omega_{\psi}(v,w)= - d(d\psi \circ J)(v,w)$ is a symplectic form. A closed contact manifold $(Y,\xi)$ is called Stein fillable if there exists a  Stein manifold $(X,J,\psi)$ such that $\psi$ is bounded from below, $M$ is an inverse image of a regular value of $\psi$ and $\xi= ker(-d\psi \circ J)$. In fact we have the following charecterization of Stein 4-manifolds.

\begin{theorem}(Elaishberg \cite{yasha91}; Gompf \cite{gompf98}) \label{EG}
A 4-manifold is a Stein domain if and only if it has a handle decomposition with 0-handles, 1-handles, and 2-handles and the 2-handles are attached along Legendrian knots with framing one less than the contact framing.
\end{theorem}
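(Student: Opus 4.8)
The plan is to establish the two implications by the classical arguments of Gompf (for the ``if'' direction) and Eliashberg (for the ``only if'' direction). Recall that a Stein domain $(X,J,\psi)$ carries the symplectic form $\omega_{\psi}(v,w)=-d(d\psi\circ J)(v,w)$, and that each regular level set $\psi^{-1}(c)$ is a $J$-convex hypersurface inheriting the contact structure $\xi_c=\ker(-d\psi\circ J)|_{\psi^{-1}(c)}$; the \emph{contact framing} of a Legendrian knot in such a level set is the framing given by the contact planes. We prove the equivalence by building, respectively reading off, a plurisubharmonic Morse function whose sublevel sets are Stein.

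\textbf{The ``if'' direction.} Suppose $X$ is built from $0$-, $1$- and $2$-handles with every $2$-handle attached along a Legendrian knot $L$ with framing $tb(L)-1$. We construct a plurisubharmonic Morse function on $X$ one handle at a time, keeping the inductive hypothesis that the current sublevel set is a Stein domain whose boundary is a contact level set. The base case is $B^4$ with $\psi_0=|z_1|^2+|z_2|^2$, whose boundary is $(S^3,\xi_{std})$ (a disjoint union for several $0$-handles). Across a $1$-handle one uses an explicit plurisubharmonic model on $S^1\times B^3$ that matches the old function near the attaching region, so the two pieces glue; the new boundary is the contact connected sum of the old one with $(S^1\times S^2,\xi_{std})$. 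The key step is the $2$-handle: by the standard neighbourhood theorem for Legendrian knots \cite{geiges} a neighbourhood of $L$ in the Stein domain is, respecting $\psi$, a fixed standard model, and into this model one inserts Eliashberg's standard critical Weinstein $2$-handle, whose attaching sphere is $L$ and whose attaching framing is exactly $tb(L)-1$. This handle carries its own plurisubharmonic function agreeing with $\psi$ along the gluing locus, and $J$-convexity of the relevant boundary pieces ensures that, after smoothing corners, the enlarged manifold is again Stein. Iterating over all handles gives a Stein structure on $X$.

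\textbf{The ``only if'' direction.} Conversely, let $(X,J,\psi)$ be a Stein domain. After a $C^\infty$-small perturbation (which preserves plurisubharmonicity) we may assume $\psi$ is Morse, and after rescaling near the critical points that it is self-indexing, with a gradient-like vector field adapted to the Liouville flow of $\omega_{\psi}$. Every critical point has Morse index at most $2$: the negative eigenspace of the Hessian of a strictly plurisubharmonic function contains no complex line (if $v$ and $Jv$ both lay in it, the positivity of the Levi form would force $\mathrm{Hess}(v,v)+\mathrm{Hess}(Jv,Jv)>0$), hence it is totally real and at most $2$-dimensional. Thus $X$ has a handle decomposition with only $0$-, $1$- and $2$-handles. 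For the attaching data, fix an index-$2$ critical point $p$ at level $c$; for small $\varepsilon$ the sublevel set $\{\psi\le c-\varepsilon\}$ is a Stein domain with $J$-convex boundary $M=\psi^{-1}(c-\varepsilon)$, and the attaching circle of the corresponding $2$-handle is the intersection of the descending manifold of $p$ with $M$. A computation in Morse coordinates compatible with $J$, in which the descending subspace at $p$ may be taken Lagrangian for $\omega_{\psi}$, shows this circle is Legendrian for $\xi$ on $M$ and that the handle is attached with framing $tb-1$; one may first isotope to make the finitely many attaching circles embedded and disjoint, which is generic in dimension $3$. Gray stability lets one transport this picture between nearby level sets, and reading off the decomposition level by level yields the asserted presentation of $X$.

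\textbf{Main obstacle.} The substantive work is the $2$-handle analysis on both sides: producing Eliashberg's standard critical handle with the correct plurisubharmonic function and verifying that attaching it preserves the Stein condition, and the local normal-form computation near an index-$2$ critical point that identifies the descending circle as Legendrian with framing $tb-1$. Both rely on precise local models for $J$-convex hypersurfaces and their characteristic foliations together with Gray-type stability; these are exactly the technical cores of \cite{yasha91} and \cite{gompf98}, to which we refer for the full details.
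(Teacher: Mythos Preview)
The paper does not supply its own proof of this theorem: it is stated as a background result with attribution to Eliashberg \cite{yasha91} and Gompf \cite{gompf98}, and no argument is given beyond the citation. Your proposal goes further than the paper by sketching the classical proof from those references --- constructing a plurisubharmonic function handle by handle for the ``if'' direction, and reading off the handle decomposition from a Morse plurisubharmonic function for the ``only if'' direction --- and the sketch is accurate as far as it goes. Since the paper offers nothing to compare against beyond the citations, there is no divergence of approach to discuss; your write-up is consistent with, and more detailed than, what appears in the paper.
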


Another way to build cobordisms is by Weinstein handle attachment, \cite{ Weinstein91}. One may attach a 0, 1, or 2-handle to the convex end of a symplectic cobordism to get a new symplectic cobordism with the new convex end described as follows. For a 0-handle attachment, one merely forms the disjoint union with a standard 4--ball and so the new convex boundary will be the old boundary disjoint union with the standard contact structure on $S^3$. For a 1-handle attachment, the convex boundary undergoes a, possibly internal, connected sum. A 2-handle is attached along a Legendrian knot $L$ with framing one less that the contact framing, and the convex boundary undergoes a Legendrian surgery. 

\begin{theorem}
Given a contact 3-manifold $(Y,\xi)$ let $W$ be a part of its symplectization, that is $(W= Y\times[0,1], \omega= d(\alpha e^t))$. Let $L$ be a Legendrian knot in $(Y,\xi)$ where we think of $Y$ as $Y\times \{ 1 \}$. If $W'$ is obtained from $W$ by attaching a 2-handle along $L$ with framing one less than the contact framing, then the upper boundary $(Y', \xi ')$ is still a convex boundary. Moreover, if the 2-handle is attached to a Stein filling (respectively strong, weak filling) of $(Y,\xi)$ then the resultant manifold would be a Stein filling (respectively strong, weak filling) of $(Y'\xi ')$.

\end{theorem}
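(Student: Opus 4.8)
The plan is to carry out Weinstein's handle attachment construction \cite{Weinstein91} explicitly and then invoke Eliashberg's results \cite{yasha91} for the statements about fillings; the argument is essentially standard, so I will only indicate where the actual content lies.

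First I would fix a local model for the index-$2$ Weinstein handle. On $(\R^4,\omega_0 = dx_1\wedge dy_1 + dx_2\wedge dy_2)$ take the Liouville vector field $v = -x_1\partial_{x_1} - x_2\partial_{x_2} + 2y_1\partial_{y_1} + 2y_2\partial_{y_2}$, so that $\mathcal{L}_v\omega_0 = \omega_0$ and $v$ is gradient-like for a Morse function with a single index-$2$ critical point whose stable manifold is the Lagrangian plane $\{y_1 = y_2 = 0\}$. Restricting to an appropriate compact region $H$, the part $\partial_-H\subset\partial H$ along which $v$ points inward is $S^1\times D^2$, a neighbourhood of the attaching circle $S^1\times\{0\}$. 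The key computation, which is the heart of the proof, is that the contact form $\iota_v\omega_0$ makes $S^1\times\{0\}$ Legendrian for the induced contact structure on $\partial_-H$, presents $\partial_-H$ as the standard contact neighbourhood of a Legendrian knot, and shows that the product framing on $S^1\times D^2$ differs from the contact framing of that Legendrian circle by exactly $-1$.

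Next I would glue. By the Legendrian neighbourhood theorem a tubular neighbourhood of $L$ in $(Y,\xi)$ is contactomorphic to the standard model, and in $W = (Y\times[0,1],\, d(e^t\alpha))$ the field $\partial_t$ is Liouville, inward-transverse along $Y\times\{0\}$ and outward-transverse along $Y\times\{1\}$; hence a neighbourhood of $L\times\{1\}$ together with a slice of collar is symplectomorphic to a neighbourhood of $\partial_-H$ in $H$. Matching the Liouville collars — this is exactly the situation of Lemma~\ref{glue} — $W' = W\cup H$ carries a Liouville vector field transverse to $\partial W'$, still inward along $Y\times\{0\}$ and outward along the upper boundary, so the upper boundary $(Y',\xi')$ with $\xi' = \ker(\iota_v\omega_0|_{Y'})$ is convex and $W'$ is a symplectic cobordism from $(Y,\xi)$ to $(Y',\xi')$. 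By the framing count of the previous step, $W'$ is smoothly $Y\times[0,1]$ with a $2$-handle attached along $L\subset Y\times\{1\}$ with framing $fr_\xi-1$, so $Y'$ is the result of Legendrian surgery on $(Y,\xi)$ along $L$.

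For the ``moreover'' statement the same local picture applies after replacing the symplectization collar with a collar of the boundary of the given filling. If $(X,\omega)$ is a strong filling, a neighbourhood of $\partial X$ contains a piece of the symplectization of $(Y,\xi)$, so gluing $H$ there produces a strong filling of $(Y',\xi')$ with no concave boundary. If $(X,J,\psi)$ is Stein, then by Theorem~\ref{EG} adding a $2$-handle along a Legendrian knot with framing one less than the contact framing yields a $4$-manifold again admitting a Stein structure with boundary $Y'$, which is precisely Eliashberg's construction. For a weak filling $(X,\omega)$, I would first use $\omega|_\xi>0$ to deform $\omega$ in a collar of $\partial X$ so that it agrees near $L$ with a slice of the symplectization of $(Y,\xi)$; then the handle attachment, supported near $L$, goes through and the resulting symplectic form still dominates $\xi'$, giving a weak filling of $(Y',\xi')$. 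The one step that requires genuine care is this last deformation, making a weak filling look like a symplectization in a neighbourhood of $L$; everything else is bookkeeping within Weinstein's model.
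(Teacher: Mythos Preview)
Your sketch is correct and follows the standard Weinstein--Eliashberg construction. Note, however, that the paper does not actually prove this theorem: it is stated as background and immediately attributed to Eliashberg \cite{yasha91} for Stein fillings, to Weinstein \cite{Weinstein91} for strong fillings, and to Etnyre--Honda \cite{eh02} for weak fillings, with no argument given. So there is no ``paper's own proof'' to compare against; your proposal simply unpacks the content of those references, and the weak-filling deformation you flag as the delicate step is precisely what Etnyre and Honda supplied.
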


The theorem for Stein fillings was proven by Eliashberg \cite{yasha91}, for strong fillings by Weinstein \cite{Weinstein91}, and was first stated for weak fillings by Etnyre and Honda \cite{eh02}.

Starting with a Stein filling (respectively strong, weak filling) of $(Y,\xi)$ one can construct a symplectic closed manifold by capping it off. Various people have studied concave caps on contact manifold but for our purpose we need the result of Etnyre, Min and the author \cite{mukherjee19}.

\begin{theorem}\label{cap}

If $(W,\omega)$ is weak filling of $(Y,\xi)$ then there exists a closed symplectic 4-manifold $(X,\omega')$ in which $(W,\omega)$ symplectically embeds such that the complement of $W$ in $X$ is simply-connected and has $b_2^+>0.$
\end{theorem}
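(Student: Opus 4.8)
\medskip\noindent\textbf{Proof idea.} The plan is to first exhibit \emph{some} symplectic cap of $(Y,\xi)$ that already contains $(W,\omega)$, and then to repair the topology of that cap by symplectic surgeries performed entirely in its interior, so that the embedding of $(W,\omega)$ is never disturbed.

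For the first step I would invoke the existence of symplectic caps of weak fillings (Eliashberg, with alternative constructions by Etnyre and by Gay): there is a connected compact symplectic $4$-manifold $(C_0,\omega_0)$ with concave boundary $-(Y,\xi)$, so that $X_0:=W\cup_Y C_0$ is a closed symplectic $4$-manifold into which $(W,\omega)$ embeds symplectically. Building the cap from a supporting open book $(\Sigma,\phi)$ for $\xi$ --- after stabilizing the open book, if necessary, so that the page has positive genus --- one may in addition take $C_0$ to contain, in its interior and disjoint from $W$, an embedded symplectic surface $F$ of positive genus with $[F]^2=0$: in the open-book (equivalently Lefschetz-fibration) model of the cap these are the capped-off pages. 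All subsequent modifications take place in a neighbourhood of $F$, together with auxiliary closed symplectic pieces glued on along symplectic surfaces by Gompf's symplectic sum, and none of them meets $W$.

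Next I would raise $b_2^+$: symplectically fibre-sum $C_0$ along $F$ with a closed symplectic $4$-manifold $N$ possessing a square-zero symplectic surface of the same genus as $F$ whose complement in $N$ is simply connected and with $b_2^+(N)$ as large as desired (for instance $E(n)$ with $n$ large in the genus-one case, reached after a preliminary fibre sum if $F$ has higher genus; in general a high-$b_2^+$ Lefschetz fibration over $S^2$ whose vanishing cycles normally generate the fibre group). Since the glued-in piece has simply-connected fibre complement, this creates no new fundamental group --- it only kills the image of $\pi_1(F)$ --- while making $b_2^+$ of the cap positive. Then I would kill $\pi_1$: choosing finitely many generators of the fundamental group of the (enlarged) cap, I would represent each by an embedded loop, arrange it to lie on a square-zero symplectic torus in the cap (tubing in square-zero symplectic tori imported by further symplectic sums when necessary), and perform the Gompf symplectic sum with $T^2\times S^2$ along that torus --- a torus surgery that kills the generator and introduces no new $\pi_1$. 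Iterating over the generating set produces a cap $C$ with $\pi_1(C)=1$ and $b_2^+(C)>0$, and $X:=W\cup_Y C$ is the required closed symplectic $4$-manifold.

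The genuinely delicate point is the last one: killing the fundamental group of the cap \emph{within the symplectic category}. Smoothly one can always kill $\pi_1$ of a bounding $4$-manifold by surgery on embedded loops, but such a surgery is symplectic only when the loop is suitably placed on a symplectic (or Lagrangian) surface; so the real content is to show that, after the enlargements above, every element of a finite generating set can be isotoped onto a square-zero symplectic torus lying in general position --- a Gompf--Fintushel--Stern style argument, and the part where the bulk of the work lies. By comparison, the $b_2^+$ enlargement, the preservation of connectedness, and the check that each operation commutes with the inclusion of $(W,\omega)$ are routine.
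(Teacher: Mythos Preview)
The paper does not contain a proof of this theorem. It appears in Section~2 (Background) and is quoted verbatim from the companion paper of Etnyre, Min, and the author~\cite{mukherjee19}; no argument is given or sketched here. So there is nothing in the present paper to compare your proposal against.

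On its own merits, your outline follows the standard paradigm --- cap the weak filling, then repair the cap by symplectic sums and torus surgeries performed away from $W$ --- and the first two steps (existence of a cap, raising $b_2^+$ by fibre-summing with a piece whose fibre complement is simply connected) are solid. The genuinely incomplete part is exactly the one you flag: arranging that an arbitrary finite generating set for $\pi_1$ of the cap can be placed on square-zero symplectic tori so that Gompf sum kills it. The phrase ``tubing in square-zero symplectic tori imported by further symplectic sums when necessary'' hides the real difficulty, since such tubing must produce \emph{embedded} symplectic tori carrying the prescribed free-homotopy classes, and this is not automatic. If you want a self-contained proof you should consult~\cite{mukherjee19} directly; the construction there is tailored to yield simple connectivity of the cap from the outset (via an explicit Lefschetz-fibration/open-book model) rather than by after-the-fact $\pi_1$-surgery, which sidesteps the delicate step in your sketch.
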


\subsection{Heegard Floer homology}\label{4mfds} \label{heegard}

Recall that Heegaard Floer homology is an Abelian group associated to a 3-manifold $Y$, equipped with a $Spin^c$ structure  $\mathfrak{t}\in Spin^c(Y)$. These homology groups are invariant of the pair $(Y,\mathfrak{t})$ and are denoted by $HF^\infty(Y,\mathfrak{t})$, which is a $\Z [U,U^{-1}]$ module; $HF^+ (Y,\mathfrak{t})$, which is a $\Z[ U^{-1}]$ module; $HF^-  (Y,\mathfrak{t})$, which is a $\Z [U]$ module. These invariants fit into a long exact sequence
\[
\begin{tikzcd}
  \cdots \ar{r} & HF^-(Y,\mathfrak{t}) \ar{r} {\iota} & HF^\infty(Y,\mathfrak{t}) \ar{r} {\pi} & HF^+ (Y,\mathfrak{t}) \ar{r} {\delta} &  \cdots
\end{tikzcd}
 \]
 Recall that associated to this long exact sequence there is another 3-manifold invariant 
 \[
 HF^+_{red}(Y,\mathfrak{t})= Coker(\pi) \cong Ker(\iota) = HF^-_{red}(Y,\mathfrak{t})
 \]
 The isomorphism in the middle is induced by the co-boundary map. Recall that $d(Y,\mathfrak{t})$ is the minimum grading of the torsion-free elements in the image$\{\pi : HF^{\infty}(Y,\mathfrak{t})\rightarrow HF^+(Y,\mathfrak{t})\}$. For details, readers are referred to \cite{osd, os3}.
 
 Now recall that an \textit{$L$-space} $Y$ is a rational homology 3-sphere whose Heegard Floer homology is as simple as possible, that is  $HF^+_{red}(Y,\mathfrak{t})=0$ for all $Spin^c$ structures $\mathfrak{t}\in Spin^c(Y)$. 
 
 A cobordism between two 3-manifold induces a map on Heegad Floer homology. More precisely if $W$ is a cobordism from $Y_0$ to $Y_1$ and $\mathfrak{s}$ is a $Spin^c$ structure in $W$ whose restriction on $Y_i$ is denoted as $\mathfrak{s}_i$ for $i=0,1$, then there is a map $F^{\circ}_{W,\mathfrak{s}}: HF^\circ(Y_0,\mathfrak{s}_0)\rightarrow HF^\circ(Y_1,\mathfrak{s}_2)$, where $\circ = +,-\ or \ \infty $.

 
 \begin{theorem}(Ozsv\'{a}th-Szab\'{o} \cite{os4}) 
 If $W$ is a cobordism between $Y_0$ to $Y_1$ and $\mathfrak{s}$ is a $Spin^c$ structure on $W$ whose restriction on $Y_i$ is denoted as $\mathfrak{s}_i$ for $i=0,1$ then we have the following,

\[
\begin{tikzcd}
\cdots \ar{r} & HF^-(Y_0,\mathfrak{s}_0)\ar{d}{F^-_{W,\mathfrak{s}}} \ar{r}{\iota _0} & HF^\infty(Y_0,\mathfrak{s}_0) \ar{d}{F^{\infty}_{W,\mathfrak{s}}} \ar{r}{\pi_0} & HF^+ (Y_0,\mathfrak{s}_0) \ar{d} {F^+_{W,\mathfrak{s}}} \ar{r}{\delta _0} & \cdots \\
\cdots  \ar{r} & HF^-(Y_1,\mathfrak{s}_1) \ar{r}{\iota_1} & HF^\infty(Y_1,\mathfrak{s}_1) \ar{r}{\pi_1} & HF^+ (Y_1,\mathfrak{s}_1) \ar{r}{\delta_1} & \cdots
\end{tikzcd}
\]
where the vertical maps are uniquely determined up to an overall sign, and all the squares are commutative.
 
 \end{theorem}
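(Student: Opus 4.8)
The plan is to realize all three vertical maps as induced by a single chain-level morphism of $\Z[U]$-modules, after which the commuting ladder becomes a formal consequence of the snake lemma.

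First I would recall that, for a fixed admissible pointed Heegaard diagram of $(Y,\mathfrak{t})$, the complex $CF^\infty(Y,\mathfrak{t})$ is free over $\Z[U,U^{-1}]$ on the intersection points, $CF^-(Y,\mathfrak{t})\subset CF^\infty(Y,\mathfrak{t})$ is the $\Z[U]$-submodule they span, and $CF^+(Y,\mathfrak{t})$ is the quotient, so that
\[
0 \longrightarrow CF^-(Y,\mathfrak{t}) \longrightarrow CF^\infty(Y,\mathfrak{t}) \longrightarrow CF^+(Y,\mathfrak{t}) \longrightarrow 0
\]
is a short exact sequence of chain complexes whose long exact sequence in homology is precisely the one in the statement, with $\delta$ the connecting homomorphism. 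Hence it suffices to produce chain maps $F^\circ_{W,\mathfrak{s}}\colon CF^\circ(Y_0,\mathfrak{s}_0)\to CF^\circ(Y_1,\mathfrak{s}_1)$, $\circ\in\{-,\infty,+\}$, commuting with the horizontal inclusion and projection; naturality of the snake lemma then delivers the ladder and exhibits $F^-_{W,\mathfrak{s}}$, $F^\infty_{W,\mathfrak{s}}$, $F^+_{W,\mathfrak{s}}$ as the induced vertical arrows.

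Next I would build these maps from a handle decomposition. Assuming for simplicity that $W$, $Y_0$, $Y_1$ are connected (the general case reduces to this), $W$ admits a handle decomposition on $Y_0\times[0,1]$ using only handles of index $1$, $2$, and $3$. Writing $W=W_1\cup W_2\cup W_3$ with $W_i$ the union of the $i$-handles, I set $F^\circ_{W,\mathfrak{s}}=F^\circ_{W_3,\mathfrak{s}}\circ F^\circ_{W_2,\mathfrak{s}}\circ F^\circ_{W_1,\mathfrak{s}}$. The chain map for $W_1$ is, under the K\"unneth identification, the inclusion tensoring with the top generator of $CF^\circ$ of a connected sum of copies of $S^1\times S^2$, and the one for $W_3$ is the dual projection; both are manifestly $\Z[U]$-equivariant. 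The chain map for $W_2$ is defined from a Heegaard triple subordinate to a bouquet for the attaching link by counting index-zero holomorphic triangles, each weighted by $U^{n_w}$ where $n_w$ is the multiplicity of the triangle's domain at the basepoint, and this is again a $\Z[U]$-module map on $CF^\infty$. In every case $\Z[U]$-equivariance forces the chain map to send $CF^-$ into $CF^-$ and hence to descend to $CF^+$, so each factor gives a commuting diagram of the three short exact sequences; composing, so does $W$. Passing to homology yields the commutative ladder claimed in the theorem.

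Finally I would address well-definedness up to an overall sign. The sign indeterminacy already enters the $W_2$ map through the choice of a coherent orientation system on the triangle moduli spaces; independence from the remaining choices — the admissible triple, the path of almost-complex structures, the relative order of handles of equal index, and the handle decomposition of $W$ itself — is exactly the invariance package underpinning the construction of the cobordism maps, established by isotopy- and handleslide-invariance of triangle counts together with the associativity relations from counting holomorphic quadrilaterals, plus the routine analysis of the $1$- and $3$-handle maps. Since all of this is carried out at the $CF^\infty$ chain level in a $\Z[U]$-equivariant manner, it simultaneously pins down the three vertical maps up to one common sign; once the sign of $F^\infty_{W,\mathfrak{s}}$ is fixed, those of $F^-_{W,\mathfrak{s}}$ and $F^+_{W,\mathfrak{s}}$ are forced by the diagram. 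The hard part is precisely this last invariance, which is essentially the full construction of the Ozsv\'ath--Szab\'o cobordism maps; by comparison, compatibility with the long exact sequences is formal, amounting only to the observation that the chain maps respect the $U$-filtration. I would therefore organize the write-up by recalling the $\widehat{HF}$ and $HF^+$ constructions essentially verbatim, noting their $\Z[U]$-equivariant lifts to $CF^\infty$, and reading off the diagram from the snake lemma.
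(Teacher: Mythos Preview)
Your sketch is a correct outline of the Ozsv\'ath--Szab\'o construction: the chain-level short exact sequence, the handle-by-handle definition of the cobordism maps, $\Z[U]$-equivariance forcing compatibility with the filtration, and the snake lemma producing the commutative ladder are exactly the ingredients. Note, however, that the paper does not supply its own proof of this theorem; it is quoted as a background result from \cite{os4} and used as a black box, so there is no ``paper's own proof'' to compare against. Your write-up therefore goes well beyond what the paper does, and what you have is essentially a condensed summary of the original Ozsv\'ath--Szab\'o argument rather than an alternative to anything in this paper.
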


 The \textit{composition law} states that if $W_0$ is a cobordism from $Y_0$ to $Y_1$ and $W_1$ is a cobordism from $Y_1$ to $Y_2$, and let $\mathfrak{s}_i$ be the $Spin^c$ structure on $W_i$ for $i=0,1$, then the relationship between composition of $F_{W_0,\mathfrak{s}_0}$ with $F_{W_1,\mathfrak{s}_1}$, and the maps induced by the composite cobordism $W=W_0\cup_{Y_1} W_1$ is
 \[
 F^\circ_{W_1,\mathfrak{s}_1} \circ F^\circ_{W_0,\mathfrak{s}_0}= \sum_{\{\mathfrak{s} \in Spin^c(W) |\  \mathfrak{s}|W_i=\mathfrak{s}_i,i=0,1\}} \pm F^{\circ}_{W,\mathfrak{s}}.
 \]
 
\subsection{Closed 4-manifold invariants}\label{cobordism}

 There is a variant of the cobordism invariant which is defined for cobordism with $b_2^+(W)>1$. This following lemma is proven by Ozsv\'{a}th and Szab\'{o} \cite{os4}
 
 \begin{lemma}
 Let $W$ be a cobordism between $Y_0$ and $Y_1$ with $b_2^+(W)>0$. Then the induced cobordism map $F^\infty_{W,\mathfrak{s}}$ vanishes for all $Spin^c$ structures on $W$. 
 \end{lemma}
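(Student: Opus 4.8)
The plan is to reduce, using the composition law together with the blow-up and connected-sum formulas for cobordism maps, to a single model computation in which $F^\infty_{W,\mathfrak s}$ is killed by a grading-parity obstruction. First, we may assume that $\mathfrak s$ restricts to a torsion $Spin^c$ structure on each of $Y_0$ and $Y_1$, since otherwise $HF^\infty$ of that boundary component vanishes and there is nothing to prove. We will use repeatedly that $F^\infty_{W,\mathfrak s}$ is a $\Z[U,U^{-1}]$-module homomorphism, homogeneous of degree $\tfrac14\bigl(c_1(\mathfrak s)^2-2\chi(W)-3\sigma(W)\bigr)$, and that $HF^\infty(S^3)\cong\Z[U,U^{-1}]$ is supported in even gradings. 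Fixing a handle decomposition of $W$ relative to $Y_0\times\{0\}$, the $1$-handle and $3$-handle cobordism maps are split injective, respectively split surjective, on $HF^\infty$ — each merely tensors with, or projects off, a copy of $HF^\infty(S^1\times S^2)$ — and they contribute nothing to $b_2^+$; so by the composition law it suffices to treat the case in which $W$ is built from $Y_0\times[0,1]$ by attaching $2$-handles along a framed link whose intersection form has a positive eigenvalue.

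The model computation is the following. For $\mathfrak h\in Spin^c(\mathbb{CP}^2)$ one has $c_1(\mathfrak h)=(2k+1)H$ with $H^2=1$, so the cobordism map $F^\infty_{C,\mathfrak h}$ induced on $HF^\infty(S^3)$ by $C:=\mathbb{CP}^2\setminus(B^4\sqcup B^4)$, viewed as a cobordism from $S^3$ to $S^3$, is homogeneous of degree $\tfrac14\bigl(c_1(\mathfrak h)^2-2\chi(C)-3\sigma(C)\bigr)=\tfrac14\bigl((2k+1)^2-5\bigr)$ (here $\chi(C)=1$ and $\sigma(C)=1$), which is an odd integer. Since $HF^\infty(S^3)$ is supported in even gradings, $F^\infty_{C,\mathfrak h}=0$. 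Consequently, by the composition law applied along the $S^3$ that splits off the summand, $F^\infty_{W',\mathfrak s'}=0$ for every $\mathfrak s'$ whenever a cobordism $W'$ admits a $\mathbb{CP}^2$ connected summand — equivalently, whenever $W'$ can be built using, among its handles, a $+1$-framed, unknotted, unlinked $2$-handle.

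It remains to produce such a summand inside a blow-up of $W$ without altering $F^\infty_{W,\mathfrak s}$. Since $b_2^+(W)>0$, the intersection form of $W$ represents a positive integer, so for a suitable $k$ the intersection form of $\widetilde W:=W\#k\overline{\mathbb{CP}}^2$ splits off a $\langle+1\rangle$ summand; choosing $Spin^c$ structures $\mathfrak e_j$ on the blow-up summands with $\langle c_1(\mathfrak e_j),E_j\rangle=\pm1$ and setting $\widetilde{\mathfrak s}:=\mathfrak s\#\mathfrak e_1\#\cdots\#\mathfrak e_k$, the blow-up formula gives $F^\infty_{\widetilde W,\widetilde{\mathfrak s}}=F^\infty_{W,\mathfrak s}$. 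Realizing the $\langle+1\rangle$-splitting geometrically by handle slides exhibits $\widetilde W$ as $W_a\cup W'\cup W_b$ with $W'$ carrying a $\mathbb{CP}^2$ connected summand, so by the composition law $F^\infty_{\widetilde W,\widetilde{\mathfrak s}}$ is a signed sum of composites each containing the vanishing factor $F^\infty_{W',\cdot}$. Hence $F^\infty_{W,\mathfrak s}=F^\infty_{\widetilde W,\widetilde{\mathfrak s}}=0$.

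The step I expect to be the main obstacle is the last one: passing from the purely homological hypothesis $b_2^+(W)>0$ to the geometric statement that, after blowing up, the cobordism contains an embedded $+1$-framed $2$-sphere cutting off a $\mathbb{CP}^2$ connected summand, and keeping the bookkeeping of $Spin^c$ structures straight through the blow-up formula and the composition law — in particular the summation over $Spin^c$ structures on $\widetilde W$ restricting to prescribed ones on $W_a$, $W'$, and $W_b$, which one can control by arranging the decomposition to take place along rational homology spheres. Once the problem has been localized to the model cobordism $C$, the vanishing is the one-line parity check above.
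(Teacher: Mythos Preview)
The paper does not prove this lemma; it simply cites Ozsv\'ath--Szab\'o \cite{os4}. So there is no ``paper's own proof'' to compare against, only the original source. Your grading/parity computation for the model cobordism $C=\mathbb{CP}^2\setminus(B^4\sqcup B^4)$ is correct and is indeed the easy endgame of the argument. The problem is exactly the step you flagged, and it is not merely bookkeeping: it is a genuine gap.

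The passage ``realizing the $\langle+1\rangle$-splitting geometrically by handle slides exhibits $\widetilde W$ as $W_a\cup W'\cup W_b$ with $W'$ carrying a $\mathbb{CP}^2$ connected summand'' does not work. Handle slides implement a change of basis in the linking matrix, but they do \emph{not} unknot or unlink the attaching circles. After your blow-ups you have a homology class of square $+1$, and it is represented by a smoothly embedded closed surface obtained by tubing a Seifert surface for a $2$-handle core into the exceptional spheres; that surface has whatever genus the Seifert surface had, and there is no mechanism in your argument to make it a sphere. Without an embedded $+1$-sphere you do not get an $S^3$ to cut along, so you cannot split off a $\mathbb{CP}^2$ summand and your appeal to the composition law through $C$ collapses. (Also, when $\partial W$ is not a rational homology sphere the intersection form need not be unimodular, so the algebraic $\langle +1\rangle$-splitting itself needs care.)

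What actually happens in Ozsv\'ath--Szab\'o's proof is that one accepts the higher-genus surface: after blowing up you have $\Sigma\subset\widetilde W$ with $g(\Sigma)=g$ and $[\Sigma]^2>0$, and you cut along the boundary of a tubular neighbourhood of $\Sigma$, which is a circle bundle over $\Sigma_g$ rather than $S^3$. The vanishing of $F^\infty$ on the disk-bundle piece is then established either via the surgery exact triangle (relating the relevant framings) or via their explicit computation of $HF^\infty$ for these circle bundles; your sphere case $g=0$ is the first instance of this. If you want to salvage your outline, the fix is to replace the $\mathbb{CP}^2$ model by the disk bundle of Euler number $n>0$ over $\Sigma_g$ and redo the model computation there, which is substantially more work than the one-line parity check.
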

 
 If we have a cobordism $W$ with $b_2^+(W)>1$, then we can cut $W$ along a 3--manifold $N$, which divides $W$ into two cobordism $W_0$ and $W_1$, both of which have $b_2^+(W_i)>0$, in such a way that the map induced by the restriction
 \[
 Spin^c(W)\rightarrow Spin^c(W_0)\times Spin^c(W_1)
 \]
 is injective. Such a cut $N$ is called $admissible \  cut$. 
 \begin{remark}
 Notice that if in a cobordism $W$ with $b_2^+(W)>1$ we find a separating  rational homology 3--sphere $N$ such that both the pieces have $b_2^+>0$, then $N$ is an admissible cut.
 \end{remark}
 
 If $\mathfrak{s}$ is a $Spin^c$ structure on $W$ whose restriction to $W_i$ is $\mathfrak{s_i}$ and the induced $Spin^c$ structures on 3-manifolds $Y_0,Y_1$ and $N$ is $\mathfrak{t}_0,\mathfrak{t}_1$ and $\mathfrak{t}$ , then 
 \[
  F^-_{W_0,\mathfrak{s}_0}: HF^-(Y_0,\mathfrak{t}_0)\rightarrow HF^-(N,\mathfrak{t})
  \]
 factors through the inclusion $HF^-_{red}(N,\mathfrak{t})\rightarrow HF^-(N,\mathfrak{t})$, and
 \[
  F^+_{W_1,\mathfrak{s}_1}: HF^+(N,\mathfrak{t})\rightarrow HF^+(Y_1,\mathfrak{t}_1)
 \]
 factors through the projection $HF^+(N,\mathfrak{t})\rightarrow HF^+_{red}(N,\mathfrak{t})$. And thus by using the identification of $HF^+_{red}(N,\mathfrak{t})\cong HF^-_{red}(N,\mathfrak{t})$ in the middle, we can define the \textit{mixed  invariant} as a map 
 \[
 F^{mix}_{W,\mathfrak{s}}: HF^-(Y_0,\mathfrak{t}_0) \rightarrow HF^+(Y_1,\mathfrak{t}_1).
 \]

\begin{remark}
 
 It is also proven in \cite{os4} that $F^{mix}$ does not depend on the choice of the admissible cut.
 \end{remark}
 
  From the discussion above one immediately sees the following result.
\begin{lemma}

 If an admissible cut $N$ of $W$ is an $L$-space then $F^{mix}_{W,\mathfrak{s}}$ vanishes.
\end{lemma}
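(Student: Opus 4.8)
The plan is to unwind the definition of the mixed invariant recalled immediately above and then invoke the defining property of an $L$-space. First I would fix the admissible cut $N$, writing $W = W_0 \cup_N W_1$ with $b_2^+(W_i)>0$, and let $\mathfrak{t}$ denote the $Spin^c$ structure to which $\mathfrak{s}$ restricts on $N$ (and $\mathfrak{t}_0,\mathfrak{t}_1$ the restrictions to $Y_0,Y_1$). Recall from the construction of $F^{mix}_{W,\mathfrak{s}}$ that, precisely because $b_2^+(W_0)>0$ and $b_2^+(W_1)>0$, the map $F^-_{W_0,\mathfrak{s}_0}$ factors through the inclusion $HF^-_{red}(N,\mathfrak{t})\hookrightarrow HF^-(N,\mathfrak{t})$ and $F^+_{W_1,\mathfrak{s}_1}$ factors through the projection $HF^+(N,\mathfrak{t})\twoheadrightarrow HF^+_{red}(N,\mathfrak{t})$, so that the mixed map is by definition the composite
\[
HF^-(Y_0,\mathfrak{t}_0)\longrightarrow HF^-_{red}(N,\mathfrak{t})\xrightarrow{\ \cong\ } HF^+_{red}(N,\mathfrak{t})\longrightarrow HF^+(Y_1,\mathfrak{t}_1),
\]
where the middle isomorphism is the one induced by the coboundary map.

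The key observation is then immediate: if $N$ is an $L$-space, then by definition $HF^+_{red}(N,\mathfrak{t})=0$ for every $\mathfrak{t}\in Spin^c(N)$, and hence also $HF^-_{red}(N,\mathfrak{t})=0$. Thus the group through which $F^{mix}_{W,\mathfrak{s}}$ factors is trivial, and the composite vanishes. Note that it does not matter which $Spin^c$ structure $\mathfrak{s}$ restricts to on $N$, since the reduced groups vanish for all of them simultaneously; this is exactly why the natural hypothesis is that $N$ be an $L$-space rather than merely that some particular $\mathfrak{t}$ have $HF^+_{red}(N,\mathfrak{t})=0$.

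There is essentially no obstacle here: all the real content lives in the already-established factorization of the mixed invariant through $HF^\pm_{red}$ of an admissible cut, and the statement is a one-line corollary once that factorization is in hand. The only point worth a quick sanity check is that an $L$-space can genuinely occur as an admissible cut, and this is guaranteed by the preceding remark, since an $L$-space is a rational homology sphere and any separating rational homology sphere across which both pieces retain $b_2^+>0$ is admissible.
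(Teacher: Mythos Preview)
Your argument is correct and is exactly what the paper intends: the lemma is stated there as an immediate consequence of the factorization of $F^{mix}_{W,\mathfrak{s}}$ through $HF^{\pm}_{red}(N,\mathfrak{t})$, together with the vanishing of these reduced groups for an $L$-space. There is no additional content in the paper's treatment beyond what you have written.
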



\begin{theorem}(Ozsv\'{a}th, Szab\'{o} \cite{oss})
If $(X,\omega)$ is a closed, symplectic manifold with $b_2^+(X)>1$, then for the canonical $Spin^c$ structure $\mathfrak{k}$ corresponding to the symplectic form, $F^{mix}_{X,\mathfrak{k}}$ is non-vanishing. Here we think of $X$ as a cobordism from $S^3$ to $S^3$ by taking out two 4-balls.

\end{theorem}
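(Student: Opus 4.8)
The plan is to reduce the assertion to a computation on Lefschetz fibrations by means of Donaldson's pencil theorem and the blow-up formula, and then to identify the resulting mixed map as carrying a distinguished generator to a distinguished generator.

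First I would invoke Donaldson's theorem: after blowing up finitely many points, $X' = X \#_n \overline{\mathbb{CP}^2}$ admits a Lefschetz fibration $\pi\colon X' \to S^2$ whose regular fiber $F$ has genus $g \geq 2$ (passing to a high-degree pencil makes $g$ as large as we wish). The canonical $Spin^c$ structure of $(X,\omega)$, together with the $Spin^c$ structures on the exceptional summands whose $c_1$ evaluates to $\pm 1$ on the exceptional spheres, glue to the canonical class $\mathfrak{k}'$ of a symplectic form on $X'$ adapted to $\pi$. The blow-up formula for $F^{mix}$ from \cite{os4} gives $F^{mix}_{X',\mathfrak{k}'} = \pm\, F^{mix}_{X,\mathfrak{k}}$, so it is enough to prove non-vanishing when $X$ itself is such a Lefschetz fibration.

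Next I would cut $X$ along the boundary $N$ of a tubular neighbourhood $\nu(F)$ of a regular fiber, arranging that $N$ is an admissible cut with both pieces having $b_2^+ > 0$. One piece is a disk bundle over $F$; the other piece $Z$ is the restriction of $\pi$ over the complementary disk, which is diffeomorphic to $\Sigma_g \times D^2$ with a sequence of $2$-handles attached along the vanishing cycles with fiber-framing $-1$ --- in particular $Z$ is a Weinstein (hence Stein) filling of the contact manifold $(N,\xi)$ that arises on its boundary. Viewing $X$ as a cobordism from $S^3$ to $S^3$ and applying the composition law, $F^{mix}_{X,\mathfrak{k}}$ becomes a composite
\[
HF^-(S^3) \xrightarrow{\,F^-\,} HF^-_{red}(N,\mathfrak{k}|_N) \;\cong\; HF^+_{red}(N,\mathfrak{k}|_N) \xrightarrow{\,F^+\,} HF^+(S^3).
\]
The two facts I would establish are: (i) in the extremal $Spin^c$ structure $\mathfrak{k}|_N$, the group $HF^+(N,\mathfrak{k}|_N)$ contains a distinguished nonzero element in the correct grading --- the class detecting the Ozsv\'ath--Szab\'o contact invariant of the Stein fillable structure $\xi$ on $N$; and (ii) the $2$-handle cobordism maps on the two sides of the cut send the canonical-class generator to the canonical-class generator. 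For (ii) the mechanism is an adjunction argument: the adjunction inequality applied to $F$ and to the surfaces built from the vanishing-cycle handles forces every $Spin^c$ structure on $X$ other than $\mathfrak{k}$ (and $\bar{\mathfrak{k}}$) to contribute $0$, so no cancellation occurs in the sum over $Spin^c(X)$ in the composition law, and a grading and dimension count pins the surviving contribution down to a generator rather than $0$.

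The hard part is step (ii): controlling the handle maps finely enough to see that the surviving term is genuinely a generator, i.e.\ ruling out both cancellation and degeneration. This is precisely where the geometry of $\omega$-compatible pseudo-holomorphic curves enters; it is the Heegaard Floer analogue of Taubes's theorem that $SW_X(\mathfrak{k}) = \pm 1$ for a symplectic $4$-manifold with $b_2^+ > 1$, and in the Floer-homological proof it is handled by combining the nonvanishing of the contact invariant of the Stein piece $Z$ with the behaviour of $HF^\pm$ under Weinstein handle attachment and its $U$-module and grading structure. An alternative, at the cost of importing much heavier machinery, is to bypass the Lefschetz-fibration computation entirely and appeal to the isomorphism between monopole and Heegaard Floer homologies together with Taubes's $SW$ nonvanishing.
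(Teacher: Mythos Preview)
The paper does not prove this theorem; it is stated in Section~\ref{cobordism} as a background result of Ozsv\'ath and Szab\'o with a citation to \cite{oss}, and is invoked only as a black box (through Remark~\ref{remark}) in the proofs of Theorems~\ref{psep} and~\ref{lemb}. There is therefore nothing in the paper to compare your proposal against.

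For what it is worth, your outline does track the architecture of the original Ozsv\'ath--Szab\'o argument: reduce to a Lefschetz fibration via Donaldson's theorem and the blow-up formula, then analyze the cobordism maps through the boundary of a fiber neighbourhood using the extremal $Spin^c$ structure and the Stein structure on the complementary piece. One point to be careful about, however: a tubular neighbourhood of a regular fiber is $F\times D^2$, and since $[F]\cdot[F]=0$ this piece has $b_2^+=0$, so the cut along $\partial\nu(F)$ is \emph{not} an admissible cut in the sense defined in this paper (both sides are required to have $b_2^+>0$). You cannot simply ``arrange'' this. The actual proof in \cite{oss} circumvents the issue by exploiting that, in the extremal $Spin^c$ structure on the fibered $3$-manifold, the group $HF^+$ is already finitely generated, so the needed factorization can be set up by hand rather than through the admissible-cut machinery; your sketch glosses over exactly this step.
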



 \begin{remark}\label{remark}
The above discussion implies that an $L$-space cannot be an admissible cut for a closed symplectic 4-manifold with $b_2^+>1$.
\end{remark}
 
\section{Topological embedding of 3-manifolds in symplectic 4-manifolds}\label{caps}

Now we will begin the proof of topological embedding of 3-maifolds into symplectic 4-manifolds.

\begin{proof}[Proof of Theorem~\ref{ttop}]
We will topologically embed a 3-manifold $Y$ into a symplectic manifold $X$ in three steps. In the fourth step we will show that the embedding is smooth after a single stabilization with $S^2\times S^2$. We start with a Kirby picture, consisting of only a $0$-handle and 2-handles, for a 4--manifold whose boundary is $Y$.

\begin{step}
 {Stein modification of the Kirby picture.}
\end{step}
Let $K_1,\ldots, K_m$ be the attaching spheres for the 2-handles. We can Legendrian realize the $K_i$ so that each $K_i$ intersects a fixed Darboux ball $B$ in a horizontal arc. We can blow up meridians to each $K_i$ so that the framing on $K_i$ is less than $tb(K_i)-2$. All the blown-up unknots can be gathered in the Darboux ball as shown in the top left of Figure~\ref{convert stein}. 
\begin{figure}[htbp]
	\begin{center}
  \begin{overpic}[scale=0.6,  tics=20]{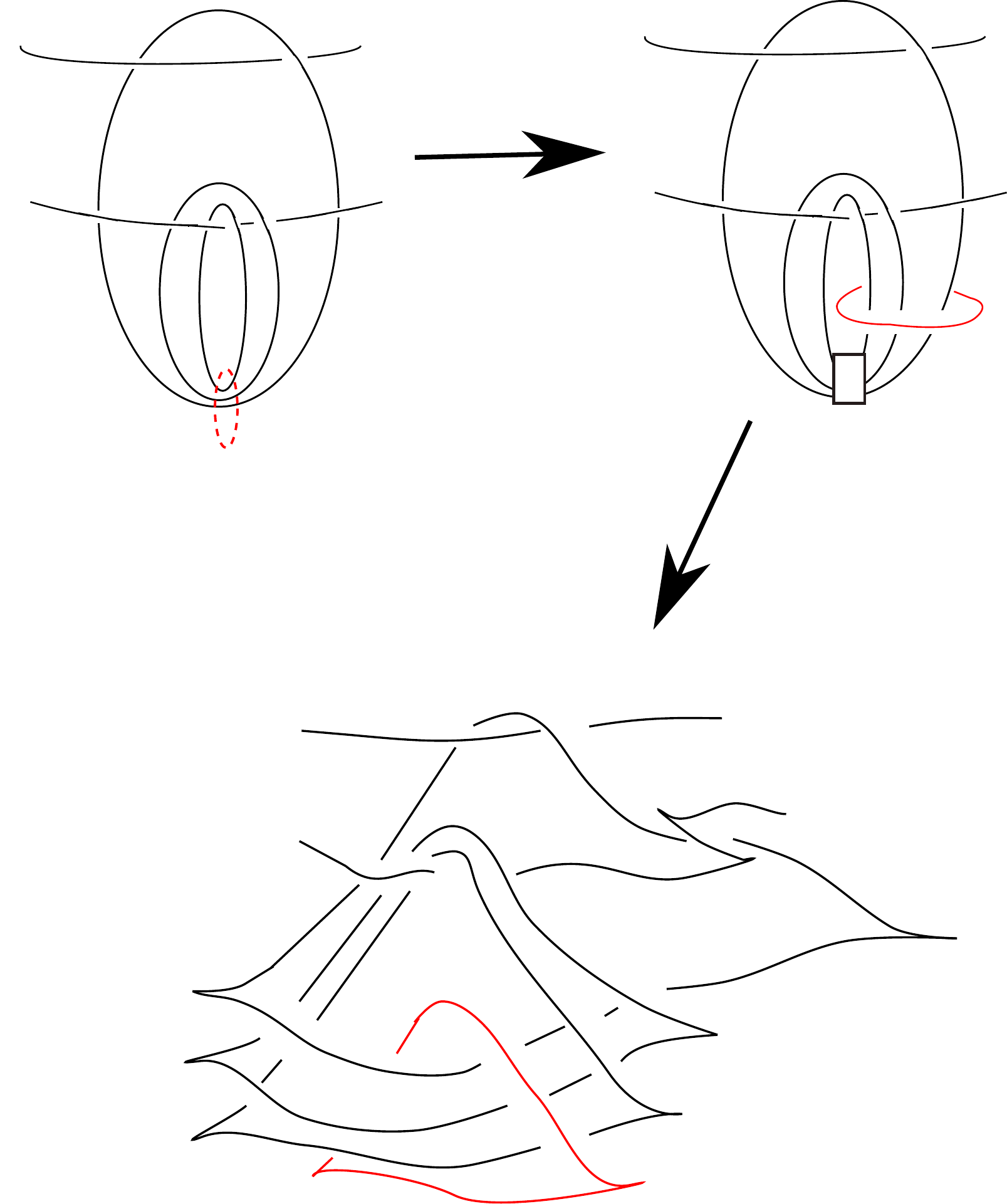}
    \put(90,290){$-1$}
    \put(75,250){$-1$}
    \put(55,240){$-1$}
    \put(60,200){}
    \put(120,300){blow-up}
    \put(270,290){$-2$}
    \put(240,280){$-2$}
    \put(225,260){$-2$}
    \put(270,250){$-1$}
    \put(270,235){$K$}
    \put(225,213){$-1$}
    \put(200,190){Legendrian representation}
    \put(200,140){$(-1)$}
    \put(265,70){$(-1)$}
    \put(220,110){$(-1)$}
    \put(200,50){$(-1)$}
    \put(190,30){$(-1)$}
    \put(175,10){$-1$}
    \put(175,-5){$K$}
\end{overpic}
\caption{Converting a Kirby picture of the 3-manifold by blowing up such that the complement of the red knot is Stein. Here $(-1)$ is measured relative to the contact framing.}
\label{convert stein}
\end{center}
\end{figure}
Blow up one more unknot as indicated in the upper right of Figure~\ref{convert stein} and notice that the resulting link $L$ can be Legendrian realized as in the bottom diagram of Figure~\ref{convert stein}. Let $K$ be the unknot with framing $-1$ in the figure. We now stabilize the components of $L-K$ so that the Thurson-Bennequin invariant of each component is one larger than its surgery coefficients. Legendrian surgery on $L-K$ together with $-1$ surgery on $K$ gives our manifold $Y$. (Notice that to realize $L$ each $K_i$ might need to be stabilized one extra time as shown in the figure. This is why we arranged the surgery coefficients to be less than $tb(K_i)-2$.) So the manifold $W_0$ obtained by attaching Stein 2-handles to $L-K$ is a Stein manifold and we denote the boundary by $Y_0$. Now attaching a 2-handle to $W_0$ along $K$ with framing $-1$ gives a 4-manifold $W$ with boundary $Y$. 


\begin{step}\label{step2}
Attach a cork and apply cork-twist.
\end{step}
We begin by constructing a manifold $W_2$ by modifying the surgery presentation that is add the 1- and 2-handle shown in Figure~\ref{stein}, where the 2-handle links $K$ as indicated and is otherwise disjoint from $L$ ( by abusing of language we will call this operation as attaching Mazur cork). Said another way, we can build a cobordism $W_1$ by attaching the 1- and 2-handle to $Y\times [0,1]$ along $Y\times \{1\}$. The manifold $W_2$ is now simply $W\cup W_1$ with $\partial W$ glued to $-Y\subset W_1$ and $W_1$ is a cobordism from $Y$ to some manifold $Y'$.

We can apply a cork-twist by interchanging the 1-handle and the $0$-framed 2-handle.
The cork twist does not change the boundary 3-manifold $Y'$. After the cork twist the knot $K$ is passing over the 1-handle geometrically once and thus they cancel each others. After this handle cancellation the knot $K$ in the original picture Figure~\ref{convert stein} is replaced by $-1$ framed knot in the third picture of Figure~\ref{stein}. Notice that this new knot can be realized by a Legendrian knot that has Thurston-Bennequin invariant $+1$ and thus $-1$ smooth surgery on this knot can be realized as Legendrian surgery on a stabilization of the knot. So we get a Stein filling of the boundary.

 \begin{figure}[htbp]
	\begin{center}
  \begin{overpic}[scale=0.6,  tics=20]{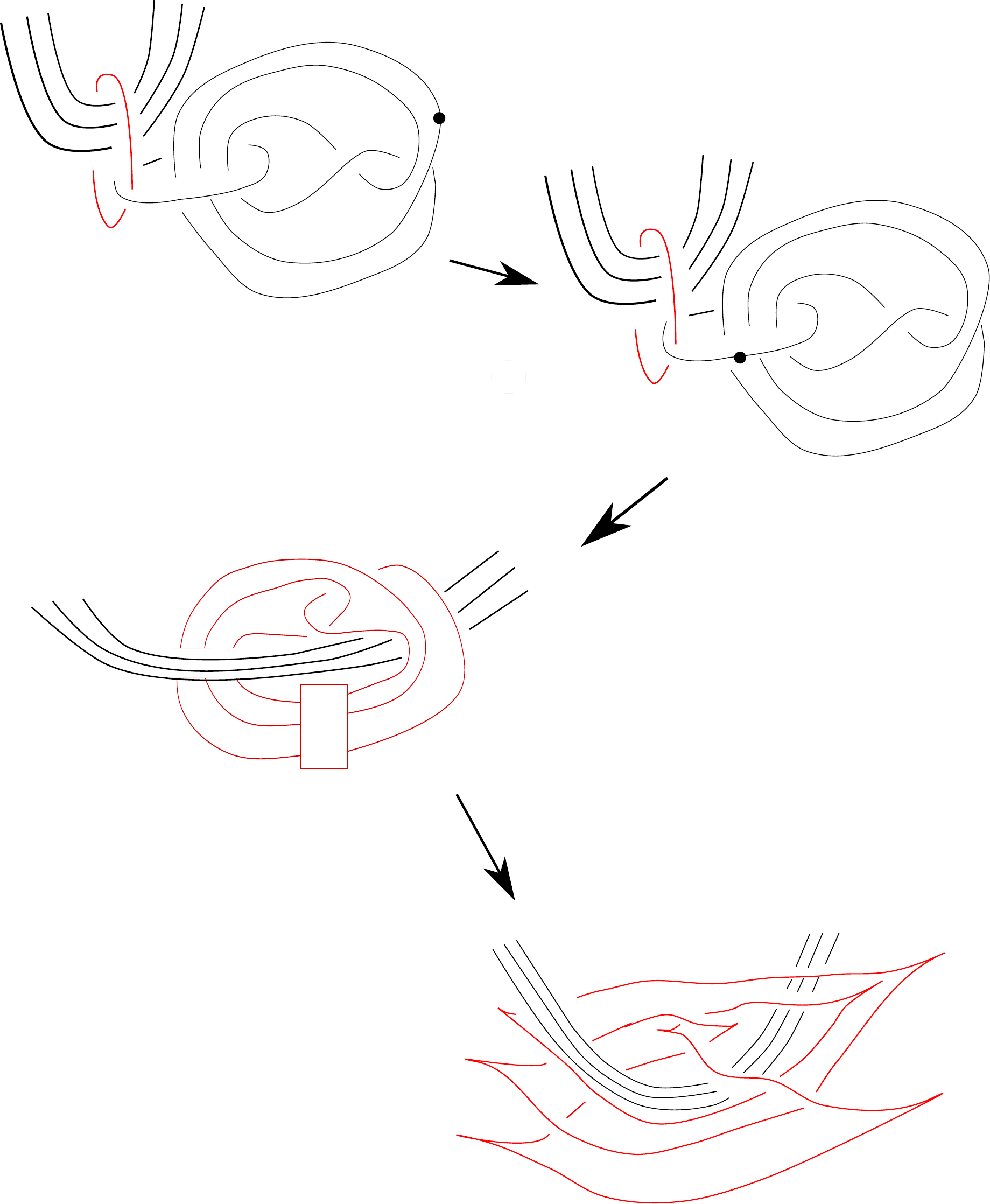}
\put(25,300){$-1$}
\put(90,340){$0$}
\put(200,250){$-1$}
\put(140,310){\tiny{cork-twist}}
\put(300,320){$0$}
\put(220,220){\tiny{handle cancellation}}
\put(100,210){$-1$}
\put(100,150){-1}
\put(160,120){\tiny{Stein presentation}}
\end{overpic}

\caption{
The maximal Thurston-Benequin number of the red knot in the bottom picture is $+1$. So it is a Stein $2-$handle attachment.}
\label{stein}
\end{center}
\end{figure}

\begin{step} Construct a simply connected a closed symplectic 4-manifold. 
\end{step}

 Also notice that the $W_1$ deformation retract onto $Y$ and $W$ is a 2-handlebody, so in particular $W'_2$ is simply connected. So in Step~\ref{step2} when we do the cork-twist the new manifold $W_2'$ is still homeomorphic to the compact manifold $W_2$ by a result of Freedman \cite{freedman} and thus the original 3-manifold has a topological, locally flat embedding in $W_2'$. Now we use the simply connected cap constructed in Theorem~\ref{cap} to cap off the upper boundary $W'_2$ to get a closed simply connected symplectic 4-manifold $X$ into which the 3-manifold $Y$ topologically, locally flatly embeds.

\begin{step}
Smooth embedding after one stabilization.
\end{step}

We can stabilize $X$ by adding a Hopf link to $W_2'$ and using the same cap (of course this stabilized 4-manifold is no longer symplectic). We can now handle slide one of the components $C$ of the Hopf link over the $0$-framed knot in the Mazur cork as indicated at the top of Figure~\ref{smooth}. Using the $0$-framed meridian to $C$ we can untangle $C$ from the 2-handle in the Mazur cork as shown in the middle of Figure~\ref{smooth}. We can further slide $C$ over the $0$-framed meridian to turn $C$ into a meridian of the 1-handle. Thus the 1-handle can be cancelled with $C$, leaving the bottom picture in Figure~\ref{smooth}. Thus we see a smooth embedding of $Y$ into $W_2'\# S^2\times S^2$ and thus into $X \# S^2\times S^2$.
\end{proof}

 \begin{figure}[htbp]
	\begin{center}
  \begin{overpic}[scale=0.6, tics=20]{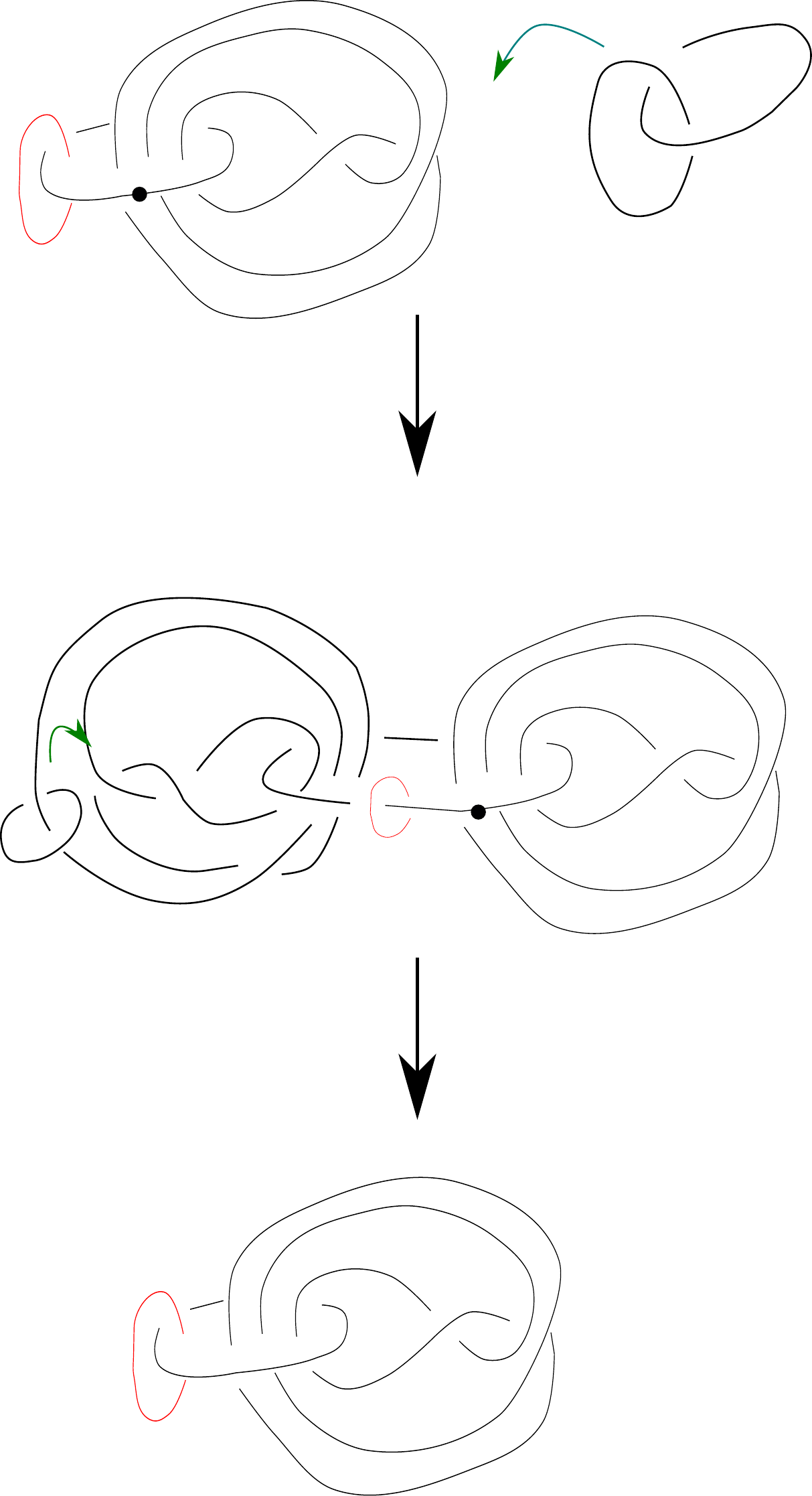}
  \put(5,360){$-1$}
  \put(80,390){$0$}
  \put(180,330){$0$}
  \put(200,350){$0$}
  \put(115,290){handle slides as indicated}
  \put(0,150){$0$}
  \put(40,240){$0$}
  \put(100,160){$-1$}
  \put(200,220){$0$}
  \put(115,120){handle slides and cancellation}
  \put(30,60){$-1$}
  \put(70,0){$0$}
  \put(150,60){$0$}

\end{overpic}

\caption{In this picture we are describing the Kirby moves of how connected summing with $S^2 \times S^2$ helps to cancel the 1-handle of the cork.}

\label{smooth}
\end{center}
\end{figure}

We now turn to Theroem~\ref{c1} that says given any 3-manifold $Y$ there is a simple invertible $\Z$ ribbon homology cobordism to a Stein fillable manifold $Y'$.

\begin{proof}[Proof of Theorem~\ref{c1}]

The cobordism $W_1$ from Step~\ref{step2} is the desired ribbon $\Z$-homology cobordism which is attaching a cork along the red knot on top Picture~\ref{stein}.

Let $D(W_1)$ be the double of $W_1$ along $Y'$, that is glue an upside down copy of $W_1$ on top of $W_1$ along the boundary. If $h$ is the 2-handle in $W_1$, then $D(W_1)$ is formed by attaching a 2- and 3-handle to $W_1$, with the 2-handle attached to a $0$-framed meridian to $h$ \cite[Section 4.2]{gs}. Check Figure~\ref{trivial}. Now by changing crossings on the attaching circle of $h$ using the $0$-framed meridian we can arrange that $h$ is passing over the 1-handle geometrically once. Thus they cancel each other. And after the cancellation the $0$-framed meridian $h$ will cancel the 3-handle. And thus the resultant manifold $D(W_1)= Y\times I$.
 \begin{figure}[htbp]
 
	\begin{center}
  \begin{overpic}[scale=0.6,   tics=20]{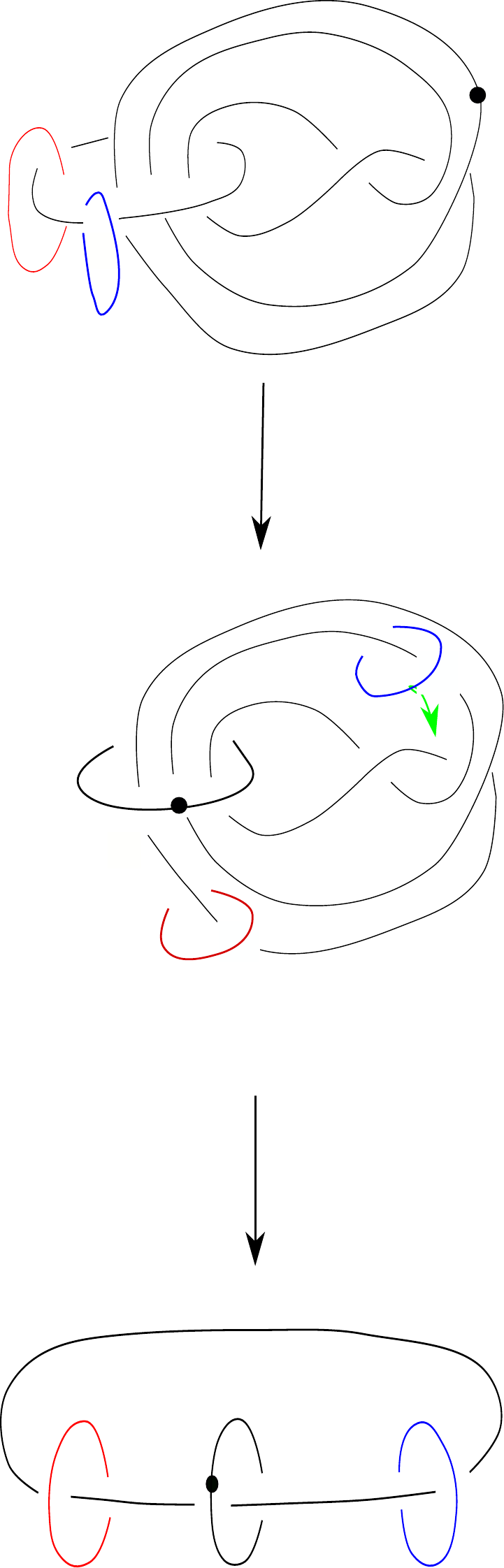}
 
  \put(-60,340){$D(W_1) \ =$}
  \put(140,340){$\cup \ 3-handle$}
  \put(75,275){ Isotopy}
  \put(75,100){Handle-slides}
  \put(20,300) {0}
  \put(80,220){0}
  \put(130,220){0}
  \put(65,340){0}
  \put(140,200){$\cup \ 3-handle$}
  \put(120,60){0}
  \put(120,0){0}
  \put(140,40){$\cup \ 3-handle$}
 
 \end{overpic}

\caption{
After isotopy, in the second picture the 0-framed blue 2-handle will help to resolve the crossings of the black 2-handle so that it can cancel the 1-handle. And after that the 3-handle will cancel the 0-framed unknotted blue 2-handle.}
\label{trivial}
\end{center}
\end{figure}
\end{proof}

 We now prove that 3-dimensional homology cobordism group is generated by Stein fillable manifolds
\begin{proof}[Proof of Theorem~\ref{cemb}]
Theorem~\ref{c1} provides a homology cobordism from any manifold to a Stein manifold. Thus the homology cobordism groups are generated by Stein manifolds.
\end{proof}

 we now prove the existence of degree 1 map.

\begin{proof}[Proof of Theorem~\ref{deg}]
Restriction to $Y'$ of the projection map on $Y\times I$ onto $Y$ will induces a degree 1 map from $Y'\rightarrow Y$.
\end{proof}

\section{Embedding L--spaces in symplectic 4-manifolds}\label{cobord}
We now give restrictions on embeddings of $L$-spaces in symplectic manifolds.

\begin{proof} [Proof of Theorem~\ref{psep}]
Suppose $Y$ is an $L$-space that smoothly embeds in a closed 4-manifold. We begin by showing it is separating. To this end we assume it is non-separating. Let $X_1$ be the compact manifold obtained from $X$ by cutting along $Y$. Notice that $\partial X_1= Y \sqcup -Y$, so we can glue two copies $X_1^1,X_1^2$ of $X_1$ along their boundaries to get a closed manifold $X'$. As constructed, $X'$ is a double cover of $X$ so, in particular, we can lift the symplectic form using the covering map and thus $X'$ is symplectic. Let $N$ be a neighbourhood of an arc in $X_1^1$ connecting its boundary components. Set $X_1' = \overline{X_1^1-N}$ and $X_2'=X_1^2\cup N$. Clearly $\partial X_i'= Y \# -Y$ which is an $L$-space. As $X$ is symplectic and $Y$ is a rational homology sphere, by using the Mayer-Vietoris sequence we can see that $b_2^+(X_i')= b_2^+(X)>0$. So $Y\# \overline{Y}$ is an admissible cut for a symplectic manifold $X'$ which contradicts Remark~\ref{remark}.   

Now when $Y$ embeds in $X$ in a separating manner then one of the component of $X-Y$ must have $b_2^+=0$ or we will get the same contradiction as before.
\end{proof}



\begin{remark}\label{strategy}
 
We now discuss a strategy to show negative answer to Question~\ref{1} about $L$-spaces bounding definite $4$-manifolds. 
Before that notice, all lens spaces bound both positive-definite and negative-definite 4-manifolds, because every lens space can be thought of as the boundary of a negative plumbed manifold and $-L(p,q)=L(p,p-q)$. One can obstruct rational homology spheres $Y$ with $H_1(Y,\Z)\neq 0$ from bounding negative-definite manifolds by using the technique developed by Owens and Strle \cite{owens12} where in Theorem 2 they proved that if maximum value of $d$-invariant of $Y$ is smaller than $1/4$ (with some more algebraic conditions) then $Y$ cannot bounds a negative-definite 4 manifold. Now $d(Y,\mathfrak{s})=-d(-Y,\mathfrak{s})$, so if we find an $L$-space $Y$ with $H_1(Y,\Z)\neq 0$, for which the absolute differences between $d$-invariants for different $Spin^c$ structures are very small then that could be used to obstruct it from bounding positive-definite and negative-definite 4-manifolds (as both $Y$ and $-Y$ cannot bound negative-definite 4-manifolds). So we can ask,

\end{remark}

\begin{question} For every $n\in \N$ does there exist an $L$-space which is not an $\Z HS^3$ and whose $d$-invariant values are in $(-1/n, 1/n)$?

\end{question}

We now prove Theorem~\ref{lemb} that says an $L$-space that does not bound negative-definite 4-manifold cannot have an oriented cobordism embedding in a compact symplectic 4-manifold with convex boundary.
 
\begin{proof}[Proof of Theorem~\ref{lemb}]
Let $Y$ be an $L$-space that does not bound negative definite 4-manifold.  If $Y$ embed in any symplectic 4-manifold with weakly convex boundary $W$ then it has to be separating since otherwise we can cap off with a concave cap to get a closed symplectic manifold where $Y$ is non-separating which contradicts Proposition~\ref{psep}. So $Y$ has to be separating. When  we cap off the upper boundary of $W$ by a cap with $b_2^+ >0$, since $Y$ does not bound a negative definite 4-manifold, both the sides of $Y$ have $b_2^+>0$. In particular $Y$ is an admissible cut for a symplectic 4 manfiold with $b_2^+>1$ which is a contradiction by Remark~\ref{remark}.
\end{proof}

\begin{proof}[Proof of Corollary~\ref{YxI}]
Let $Y'$ admit weakly fillable contact structure and $Y$ be an $L$-space that does not bound a negative definite 4-manifold. If $Y$ has an oriented  cobordism embedding in $Y'\times [0,1]$, then since $Y'$ is weakly fillable Y has an oriented  cobordism embedding in a symplectic 4 manfiold with weakly convex boundary, contradicting Theorem~\ref{lemb}.
\end{proof}

We now show the existence of exotic manifolds with boundary and $b_2=1$ using the ideas above.

\begin{proof}[Proof of Corollary~\ref{cex}]
Now start with $B^4$ and attach a 2-handle $h$ along  pretzel knot $P(-2,3,7)$ to get a 4-manifold $W'$ with $S^3_9(K)$  as its boundary. Attach a cork as step~\ref{step2} of the Theorem~\ref{ttop} and get $W$ with $b_2^+(W)=1$. After a cork-twist we can see that the 2-handle $h$ now passing over the 1-handle of the cork and this will increase the contact framing of $h$ by one as in Figure~\ref{tb} and thus the resulting manifold $W'$ will be Stein by Theorem~\ref{EG}. Before the cork-twist we had a smooth embedding of $S^3_9(K)$ in $W$. But by Theorem~\ref{lemb} $S^3_9(K)$cannot embed smoothly in $W'$ so they are exotic pairs.
\end{proof}

\section{Constructing rational ribbon cobordism}

\begin{proof}[Proof of the Theorem~\ref{cstein}]
Let $X$ be a compact oriented 4-manifold with boundary $Y$ and $b_1(X)=0$. Turning a handle structure on $X$ upside down, we can think of $X$ as a cobordism from $-Y$ to $\emptyset$, this is indicated in Figure~\ref{cob}.
\begin{figure}[htbp]
	\begin{center}
  \begin{overpic}[scale=0.6,  tics=20]{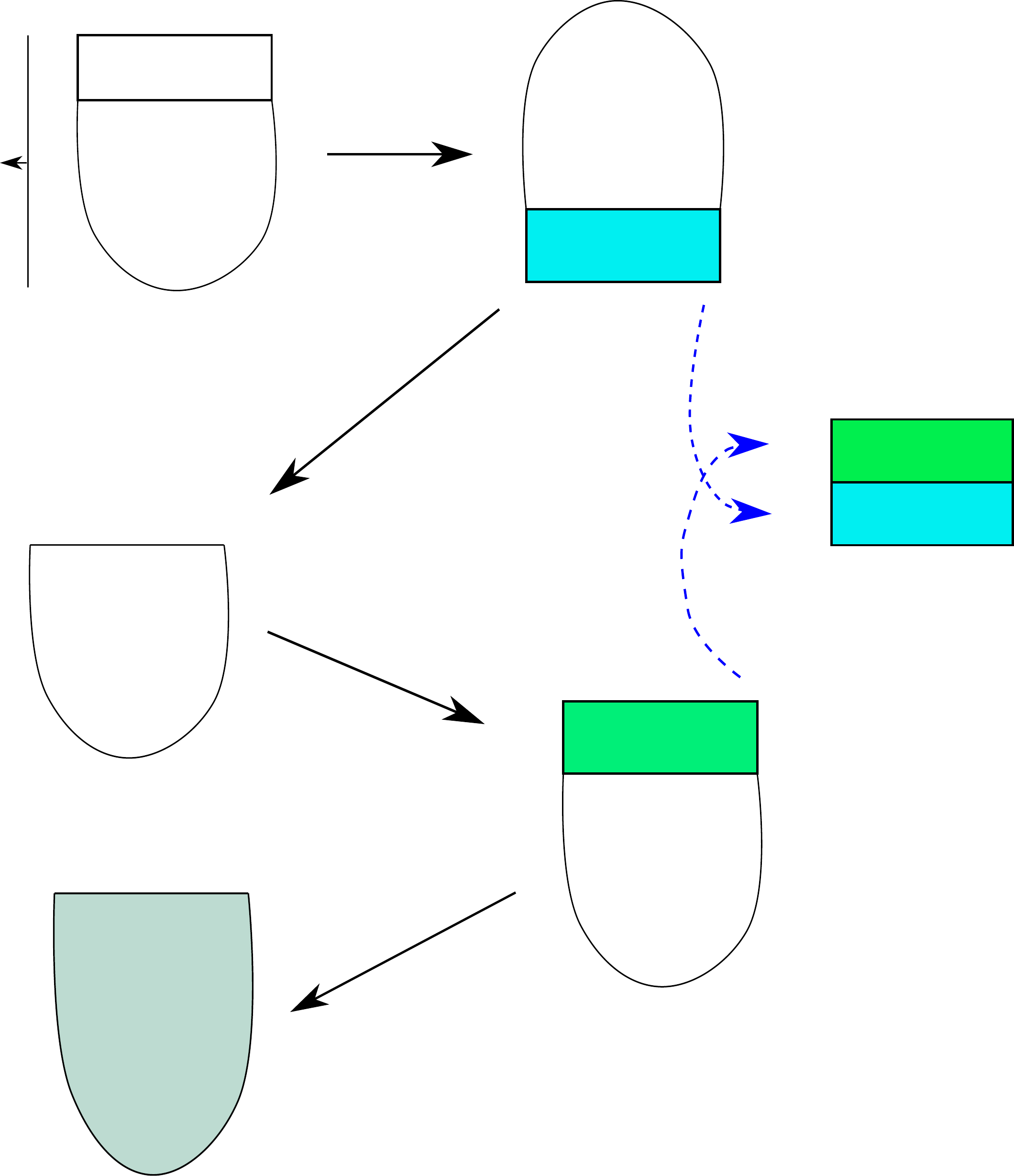}
  \put(-15,365){$X$}
 \put(35,400){3-handles}
 \put(100,420){Y}
 \put(110,380){\tiny{ turn upside-down}}
 \put(220,330){$M_2$}
 \put(260,315){-Y}
  \put(265,350){$Y_1$}
  \put(125,260){\tiny{slice off $M_2$ and turn upside-down}}
  \put(35,190){$X_1$}
  \put(100,175){\tiny{add corks}}
  \put(80,230){$Y_1$}
  \put(230,155){$M_3$}
  \put(280,170){$Y'$}
  \put(140,100){\tiny{cork-twist}}
  \put(45,50){$X'$}
  \put(325,210){$-Y$}
  \put(90,100){$Y'$}
  \put(325,280){$Y'$}
  \put(325,235){$M_2$}
  \put(325,255){$M_3$}

\end{overpic}

\caption{
A schematic of the construction of a ribbon cobordism from $Y$ to a Stein fillable $Y'$.}
\label{cob}
\end{center}
\end{figure}
Notice that in this upside-down cobordism all the 1-handles of $X$ are converted into 3-handles and all the 3-handles become 1-handles. In the upside-down $X$, 1-handles are attached onto $-Y\times [0,1]$ along $-Y\times \{ 1\}$, let us call this cobordism $M_1$. Notice that $b_1(X)=0$ so the homology long exact sequence of the pair $(X,Y)$ implies that  there exists a minimal set of 2-handles such that if we attach those on top of $M_1$, and let us call it $M_2$, then $H_1(M_2,Y;\Q))=0$. (Here by minimum we mean that if we take any 2-handle out from the set then $H_1(M_2,Y;\Q)\neq 0$.) Since we consider a minimal set of 2-handles for this construction, we have $H_2(M_2,Y;\Q)=0$ as well because in this case the number of 1-handles of $M_2$ is the same as the number of 2-handles. Thus $M_2$ is a rational ribbon cobordism from $Y$ to say $Y_1$ which is the top boundary of $M_2$, see the top right of Figure~\ref{cob}. Consider $X_1$ to be the  handlebody obtained from $X$ by taking out $M_2$ and turning what remains upside-down, this is indicated in the third picture Figure~\ref{cob}. Thus $X_1$ only has 1- and 2-handles with boundary $Y_1$. If this is Stein then we are done. If not then that implies it has some 2-handles whose smooth framing is bigger than that the contact framing minus 1 of the attaching circle in $\# S^1\times S^2$ (in this case we can think of the top boundary $Y_1$ is obtained after attaching 2-handles on the boundary of 1-handlebody which is connected sum of $S^1\times S^2)$. To fix this framing issue, we repeatedly apply the Step~\ref{step2} of the proof of Theorem~\ref{ttop}. That is we attach a cork as in Figure~\ref{stein} (where the red curve there is the handle that needs its Thurston--Bennequin invariant increased). We then do a cork twist that exchange the 1-and 2-handles. We claim this has the effect of increasing the contact framing of the original attaching sphere of the 2-handle by 1. To see this, notice that if a knot passing over 1-handle then in the front projection diagram of a knot we are actually deleting two  consecutive right and left cusps by connecting them through a 1-handle and thus we are increasing the contact framing. See Figure~\ref{tb}. 
\begin{figure}[htbp]
	\begin{center}
  \begin{overpic}[scale=0.6,  tics=20]{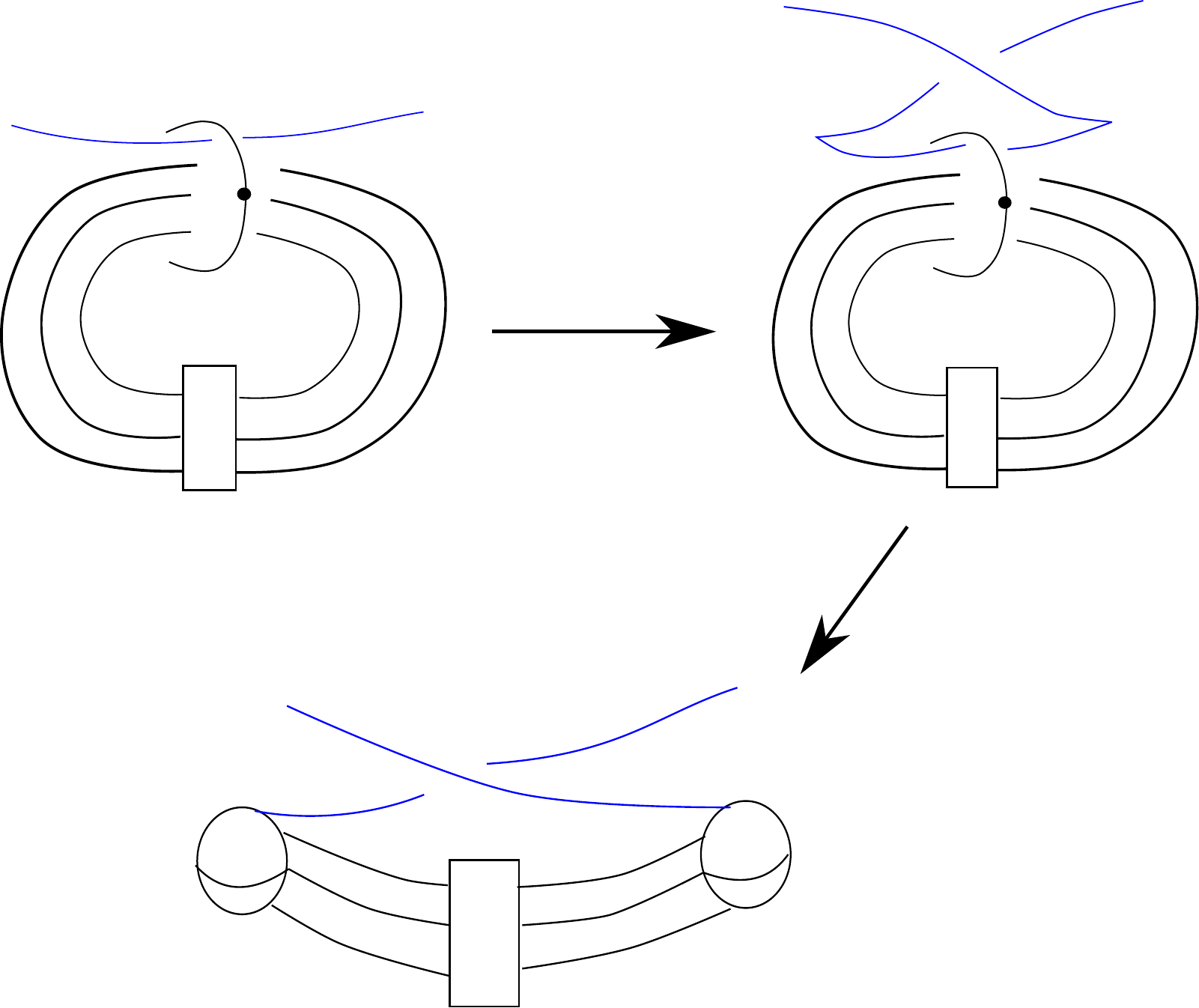}
  \put(120,160){\tiny{Isotopy}}
  \put(210,90){\tiny{Legendrian realization}}

\end{overpic}

\caption{
The contact framing of blue knot increased by +1 after a cork-twist.}
\label{tb}
\end{center}
\end{figure}
But this process does not change the smooth surgery coefficient. Let us consider the cobordism $X_2$ obtained by attaching a suitable number of corks to $X_1$ so that the manifold $X_2'$ obtained by applying the cork twists is Stein. The manifold $X_2$ and $X_2'$ are homeomorphic as the cork-twist homeomorphism can always be extend as homeomorphism on the 4-manifold by the result of Freedman~\cite{freedman}. Observe $b_2(X_2')=b_2(X_1)=b_2(X)$. Let $Y'$ be the top boundary of $X_2'$. Then there is a homology ribbon cobordism $M_3$ from $Y_1$ to $Y'$ which is given by attaching the above corks to the top of $X_1$, see the fourth picture in Figure~\ref{cob}. Glue this cobordism on top of $M_2$ to get our desired ribbon rational homology cobordism $M= M_2\cup M_3$ from $Y$ to $Y'$ with $Y'$ Stein fillable.
\end{proof}

Now we begin the proof of Theorem~\ref{AQ2} that says that given a compact 4-manifold with $\Q HS^3$ boundary one can construct a Stein 4-manifold with same algebraic topology.

\begin{proof}[Proof of Theorem~\ref{AQ2}]

Let $X$ is a compact manifold with connected boundary $Y$ which is a $\Q HS^3$, then we consider a handle decomposition of $X= X_0 \cup X_1 \cup X_2 \cup X_3$ where $X_i$ contains handles of index $i$. Consider the minimum set of 1-handles which generate the free part of $(H_1(X;\Q))$. Let $\bar{X}$ be the manifold obtained from $X$ by doing surgery on those 1-handles. (In Kirby calculus this is equivalent of replacing those dotted 1-handles with 0-framed unknotted 2-handles.) We will now show that this surgery operation does not change the $b_2$ (or more precisely the intersection form). As $Y$ is a $\Q HS^3$, $H_1(\bar{X}; \Q) = 0 = H_3 (\bar{X};\Q)$. But we are not doing anything with the 3-handles of $X$, so the only way the third homology of $\bar{X}$ vanishes with $\Q$ co-efficients is if the 3-handles cancel the 2-handles in homology. And thus from cellular homology, we can see that $b_2(X)=b_2(\bar{X})$. Also notice that the above surgery does not change the non-torsion elements of $H^2(X;\Z)$ so they have the same intersection form. Now apply the proof of Theorem~\ref{cstein} on $\bar{X}$ and we get the desired Stein manifold $X'$ with boundary $Y'$.
\end{proof}

\bibliography{references}
\bibliographystyle{plain}

\end{document}